\newtheorem{Theorem}{\bf Theorem}[section]
\newtheorem{Proposition}[Theorem]{\bf Proposition}
\newtheorem{Lemma}[Theorem]{\bf Lemma}
\newtheorem{Corollary}[Theorem]{\bf Corollary}
\newtheorem{Remark}[Theorem]{\bf Remark}
\newcommand{\fm}{\mbox{$\mathfrak{m}$}}
\newcommand{\fn}{\mbox{$\mathfrak{n}$}}
\newcommand{\fp}{\mbox{$\mathfrak{p}$}}
\newcommand{\fP}{\mbox{$\mathfrak{P}$}}
\newcommand{\bn}{\mbox{$\mathbb{N}$}}
\newcommand{\bq}{\mbox{$\mathbb{Q}$}}
\newcommand{\cm}{\mbox{$\mathcal{M}$}}
\newcommand{\rad}{\mbox{${\rm rad}$}}
\newcommand{\coker}{\mbox{${\rm coker}$}}
\newcommand{\height}{\mbox{${\rm height}$}}
\newcommand{\grade}{\mbox{${\rm grade}$}}
\newcommand{\depth}{\mbox{${\rm depth}$}}
\newcommand{\length}{\mbox{${\rm length}$}}
\newcommand{\rank}{\mbox{${\rm rank}$}}
\newcommand{\kdim}{\mbox{${\rm k}$-${\rm dim}$}}
\newcommand{\prdim}{\mbox{${\rm pr}$-${\rm dim}$}}
\newcommand{\graded}{\mbox{${\bf G}$}}
\newcommand{\adj}{\mbox{${\rm adj}$}}
\newcommand{\Proj}{\mbox{${\rm Proj}$}}
\newcommand{\Tor}{\mbox{${\rm Tor}$}}
\newcommand{\Spec}{\mbox{${\rm Spec}$}}
\newcommand{\Reg}{\mbox{${\rm Reg}$}}
\begin{document}

\title{J. Sally's question and a conjecture of Y. Shimoda}

\author{{\sc Shiro Goto, Liam O'Carroll and Francesc Planas-Vilanova}}

\date{\today}


\begin{abstract}
In 2007, Y.~Shimoda, in connection with a long-standing question of
J.~Sally, asked whether a Noetherian local ring, such that all its
prime ideals different from the maximal ideal are complete
intersections, has Krull dimension at most two. In this paper, having
reduced the conjecture to the case of dimension three, if the ring is
regular and local of dimension three, we explicitly describe a family
of prime ideals of height two minimally generated by three
elements. Weakening the hypothesis of regularity, we find that, to
achieve the same end, we need to add extra hypotheses, such as
completeness, infiniteness of the residue field and the multiplicity
of the ring being at most three. In the second part of the paper we
turn our attention to the category of standard graded algebras. A
geometrical approach via a double use of a Bertini Theorem, together
with a result of A.~Simis, B.~Ulrich and W.V.~Vasconcelos, allows us
to obtain a definitive answer in this setting. Finally, by adapting
work of M.~Miller on prime Bourbaki ideals in local rings, we detail
some more technical results concerning the existence in standard
graded algebras of homogeneous prime ideals with an
``excessive''number of generators.
\end{abstract}


\maketitle

\section{Introduction}\label{introduction}

It is by now a classic result that in a Noetherian local ring, the
existence of a uniform bound on the minimal number of generators of
all its ideals is equivalent to its Krull dimension being at most
1. In 1978, J. Sally (see \cite[p.~52]{sa}) extended this result in
the following way.  Let $(R,\fm,k)$ be a Noetherian local ring. Then
there exists an integer $N\geq 1$ such that the minimal number of
generators $\mu(I)$ of an ideal $I$ is bounded above by $N$, for any
ideal $I$ of $R$ such that $\fm$ is not an associated prime of $I$, if
and only if $\dim(R)$, the Krull dimension of $R$, is at most 2. In
particular, if $\dim(R)\leq 2$, then there exists a bound on the
minimal number of generators of all its prime ideals. She then
remarked that it is an open question whether the converse is true (cf.
[op cit., p.~53]). In other words, if $(R,\fm,k)$ is a Noetherian
local ring such that there exists an integer $N\geq 1$ such that
$\mu(\fp)\leq N$, for all prime ideals $\fp$ of $R$, is then
$\dim(R)\leq 2$?

This question has remained open and not much progress has been made
since that time. In 2007, Y. Shimoda (\cite{shi}) asked whether a
Noetherian local ring, such that all its prime ideals different from
the maximal ideal are complete intersections, has Krull dimension at
most 2. Observe that, if all the primes other than the maximal ideal
are complete intersections, then in particular the cardinalities of
their sets of minimal generators are bounded above by the Krull
dimension of the ring.

Though the question of Y. Shimoda seems easier than that of J. Sally
because its hypothesis is at first sight much stronger, it has proved
to be difficult to answer.

For the sake of simplicity we will call a Noetherian local ring
$(R,\fm,k)$ a Shimoda ring if every prime ideal in the punctured
spectrum is of the principal class, i.e., the minimal number of
generators $\mu(\fp)$ of every prime ideal $\fp$ of $R$, $\fp\neq
\fm$, is equal to $\height(\fp)$.

We first observe that one can reduce the conjecture to the
consideration of a UFD local domain $R$ of Krull dimension at most
3. A positive answer to Shimoda's conjecture then amounts to showing
that either $\dim(R)=2$, or else $\dim(R)=3$ and then exhibiting a
prime ideal of height 2 minimally generated by 3 (or more)
elements. This question is attractive because of the combination of
its simplicity and its seeming difficulty. Indeed, we are not able, up
to now, to produce in full generality a non-complete intersection
prime ideal of height 2 in a UFD local domain of Krull dimension 3.

However, in Section~\ref{mult}, we are able to give some partial
positive answers. Firstly, if the ring $R$ is regular and local of
Krull dimension 3, then we explicitly describe a family of prime
ideals of height 2 minimally generated by 3 elements. These ideals are
determinantal ideals of $2\times 3$-matrices and, in the geometric
case, they are precisely the defining ideals of irreducible affine
space monomial curves.

Next, when trying to weaken the hypothesis of regularity, we find that
we need to add extra hypotheses, such as completeness, infiniteness of
the residue field and the multiplicity of the ring being at most 3. In
this case, if $\dim(R)=3$, we exhibit an ideal of height 2 minimally
generated by 3 elements which has a minimal prime over it which is not
Gorenstein. This will enable us to conclude that a Shimoda ring, with
these extra hypotheses, has Krull dimension at most 2.

In the second part of the paper we turn our attention to the category
of standard graded algebras (with a graded definition of the notion of
a Shimoda ring). In Section~\ref{sgraded} we find that a geometrical
approach via a double use of a Bertini Theorem, together with a result
of A. Simis, B. Ulrich and W.V. Vasconcelos (see \cite{suv}), allows
us to obtain a definitive result. In the final section,
Section~\ref{viamiller}, we sketch an adaptation of M. Miller's
arguments in \cite{miller} to the case of standard graded rings of
interest, that allows us, under somewhat more technical hypotheses, to
produce homogeneous prime ideals requiring an arbitrarily large number
of generators, and also allows us to present a mild generalization of
the core result of Section~\ref{sgraded}.

\section{The Shimoda Conjecture for rings of small 
multiplicity}\label{mult}

Let $(R,\fm,k)$ be a Noetherian local ring, of Krull dimension $d\geq
1$, and $k=A/\fm$, the residue class field of $R$. For the remainder
of this section we fix the following notation: the ring $R$ will be
called a Shimoda ring if every prime ideal in the punctured spectrum
is of the principal class, i.e., the minimal number of generators
$\mu(\fp)$ of every prime ideal $\fp$ of $R$, $\fp\neq \fm$, is equal
to $\height(\fp)$.

\begin{Remark}{\rm Let $(R,\fm,k)$ be a Noetherian local ring of Krull 
dimension $d\geq 1$. If $d=1$, $R$ is Shimoda if and only if $R$ is a
domain. In particular, a Shimoda ring need not be Gorenstein since
$R=k[[t^3,t^4,t^5]]$ is a one-dimensional local domain that is not
Gorenstein (\cite[Ex.~21.11]{e} or \cite[Ex.~18.8,
  p.~152]{m2}). Moreover, the completion of a Shimoda ring is not
necessarily a Shimoda ring, because $A=(k[x,y]/(y^2-x^2-x^3))_{(x,y)}$
is a one-dimensional Noetherian local domain whose completion is not a
domain (\cite[p.~185 and ff.]{e}).  }\end{Remark}

\begin{Remark}\label{ufd} {\rm Let $(R,\fm,k)$ be a Noetherian local 
ring of Krull dimension $d\geq 2$. If $R$ is Shimoda, then $R$ is a
UFD, and the converse holds if $d=2$. Suppose now that $d\geq 2$ and
$R$ is Shimoda. Then $R$ is also Cohen-Macaulay. Indeed, take $\fp$ a
prime ideal with $\height(\fp)=d-1$. Since $\mu(\fp)=\height(\fp)$,
then $\fp$ is generated by a regular sequence $x_{1},\ldots ,x_{d-1}$,
say (see \cite[Remark, p.~203]{da}). Set $y\in\fm$,
$y\not\in\fp$. Since $\fp$ is prime, $x_{1},\ldots, x_{d-1},y$ is a
regular sequence of length $d$.  }\end{Remark}

The purpose of this section is to prove the following result, where
$e(R)$ stands for the multiplicity of $R$ with respect to $\fm$.

\begin{Theorem}\label{shd-m}
Let $(R,\fm,k)$ be a Shimoda ring of Krull dimension $d\geq
2$. Suppose that, in addition, either
\begin{itemize}
\item[$(a)$] $R$ is regular, or
\item[$(b)$] $R$ is complete, $R\supset k$, $k$ is infinite and
  $e(R)\leq 3$.
\end{itemize}
Then $d=2$.
\end{Theorem}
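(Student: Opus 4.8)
The plan is to reduce, in both cases, to producing a prime ideal of height two that needs three generators, which by the definition of a Shimoda ring forces $\height(\fp)=\mu(\fp)$ to fail unless $d=2$. By Remark~\ref{ufd} we already know that a Shimoda ring is a UFD and Cohen-Macaulay, so the whole task is to exhibit, whenever $d\geq 3$, a height-two prime $\fp\neq\fm$ with $\mu(\fp)\geq 3$, contradicting the Shimoda property. Since localizing at a suitable prime and passing to quotients by part of a regular system of parameters preserves the relevant structure, I would first argue that it suffices to treat the case $d=3$ directly, matching the reduction to dimension three already announced in the introduction.

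For case $(a)$, where $R$ is regular of dimension three, the plan is to write down an explicit family of height-two primes given as the ideals of $2\times 3$ minors of a generic-type matrix, i.e. determinantal ideals $I_2\bigl(\begin{smallmatrix} x & y & z \\ y^a & z^b & x^c \end{smallmatrix}\bigr)$-type matrices built from a regular system of parameters $x,y,z$. First I would verify, via the Hilbert-Burch theorem, that the ideal of maximal minors of a $2\times 3$ matrix with entries in $\fm$ is perfect of grade two (hence height two in the Cohen-Macaulay ring $R$), and that it is minimally generated by the three minors. Next I would check primeness: the key is that these are the defining ideals of monomial space curves $(t^{\alpha},t^{\beta},t^{\gamma})$ in the geometric case, which are known to be prime, and one transfers this to the abstract regular local setting by a suitable choice of exponents making the three minors a minimal generating set. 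The delicate point is ensuring that $\mu(\fp)=3$ rather than $2$, i.e. that the matrix cannot be reduced; this is a genericity condition on the exponents that I would pin down.

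For case $(b)$, where $R$ is complete, contains $k$, has infinite residue field, and $e(R)\leq 3$, regularity is unavailable, so I would instead exploit the low multiplicity. The strategy is to use the structure theory of Cohen-Macaulay local rings of small multiplicity: after passing to a minimal reduction (using $k$ infinite to guarantee enough general elements) one controls the associated graded ring and the defining equations. The plan is to construct an ideal $I$ of height two, minimally generated by three elements, such that some minimal prime $\fq$ over $I$ is not Gorenstein, exactly as flagged in the introduction. The non-Gorenstein minimal prime is the crucial device: if $\fq$ were a complete intersection it would be Gorenstein, so the failure of the Gorenstein property at $\fq$ certifies $\mu(\fq)>\height(\fq)$, violating the Shimoda condition whenever $\height(\fq)\geq 2$, and thus again forcing $d=2$.

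The main obstacle, I expect, is case $(b)$: in the regular case the determinantal/monomial-curve construction is clean and primeness is classical, but without regularity one must build the non-Gorenstein minimal prime essentially by hand, and the verification that the constructed ideal genuinely has a height-two minimal prime that is both non-Gorenstein and lies in the punctured spectrum is where the hypotheses of completeness, $R\supset k$, infinite $k$, and $e(R)\leq 3$ all get used simultaneously. I would therefore spend most of the effort on controlling the fiber of the construction so that the bad prime $\fq$ is neither the maximal ideal nor of the wrong height, and on certifying the failure of Gorensteinness intrinsically (for instance through the type, or a socle computation in $R/\fq$ after a further generic hyperplane section) rather than relying on any regularity-specific tool.
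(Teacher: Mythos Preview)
Your high-level strategy matches the paper's exactly: reduce to $d=3$ by modding out elements of $\fm\setminus\fm^2$ (Remark~\ref{red-d}), then in each case exhibit a height-two prime needing three generators. The determinantal shape you describe is precisely the paper's HN ideal (Remark~\ref{onHN}), and in case~(b) the paper does exactly what you propose---find a non-Gorenstein minimal prime over such an $I$.

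There is, however, a real gap in your plan for primeness in case~(a). You propose to ``transfer'' primeness from the monomial-curve situation to the abstract regular local ring. That transfer goes through completion plus the Cohen structure theorem, and so only covers the equicharacteristic case; a regular local ring of mixed characteristic does not complete to a power series ring over a field. The paper's proof of Proposition~\ref{shd-reg} avoids this entirely: after completing, it takes an arbitrary minimal prime $\fp$ over $I$, passes to the integral closure $V$ of $R/\fp$ (a DVR), reads off the valuations $\nu(x_i)$ from the HN relations, and then compares $\length_R(R/(x_1R+I))$ (computed via the explicit monomial Lemma~\ref{length-quotient}) with $\length_R(R/(x_1R+\fp))$ (computed via $V$). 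The inequality~(\ref{inequality}) gives $e(R)\geq rl$, so $e(R)=1$ forces equality of the two lengths, hence $\fp/I=0$ by Nakayama. This multiplicity/valuation argument is the missing idea and is what makes the proof characteristic-free.

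For case~(b), note that the paper does \emph{not} design a new ideal: it uses the very same HN ideal $I$ with $a=(1,1,1)$, $b=(2,1,1)$, built from a minimal reduction $x_1,x_2,x_3$ of $\fm$. The non-Gorensteinness of the minimal prime is obtained by viewing $R$ as a free module over the regular subring $S=k[[x_1,x_2,x_3]]$ (this is where completeness, $R\supset k$, and infiniteness of $k$ enter), so that $R/I$ is free over $A=S/J\cong k[[t^3,t^4,t^5]]$ of rank $e(R)$; a conductor computation shows $\rank_A(R/\fp)\geq 2$, and then the associativity formula for multiplicities, together with $e(R)\leq 3$, forces $I=\fp$, contradicting the fact that the HN ideal is never Gorenstein. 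Your instinct that the hypotheses are used ``simultaneously'' is right, but the mechanism is this rank-and-multiplicity count rather than a socle computation in $R/\fp$.
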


Note in passing that when we weaken the hypothesis from ``regular''
($e(R)=1$) to ``$e(R)\leq 3$'', we need to add the extra hypotheses
that $R$ is complete and contains its infinite residue field. Note
that in either case, $R$ is a local Cohen-Macaulay domain and we use
properties of such domains without further mention. We first show how
to reduce the dimension of $R$.

\begin{Remark}\label{red-d} {\rm
Let $(R,\fm,k)$ be a Shimoda ring of Krull dimension $d\geq 2$. Let
$a\in\fm\setminus\fm^2$. Then $R/aR$ is a Shimoda ring of Krull
dimension $d-1$. Moreover,
\begin{itemize}
\item[$(a)$] If $R$ is regular, then $R/aR$ is
regular. 
\item[$(b)$] If $R$ is complete, then $R/aR$ is complete. If $k$ is
  infinite, then $a$ can be chosen to be a superficial element and
  $e(R/aR)=e(R)$.
\end{itemize}
In particular, in Theorem~\ref{shd-m}, one can suppose that $2\leq
d\leq 3$.  }\end{Remark}

\begin{proof}[Proof of the Remark~\ref{red-d}] 
 Take $a\in\fm\setminus\fm^2$. Given $\fp$, a prime ideal, $\fp\neq
 \fm$, such that $a\in\fp$, then $a\in\fp\setminus\fm\fp$. Therefore
 $\mu(\fp/aR)=\mu(\fp)-1=\height(\fp)-1=\height(\fp/aR)$ and $R/aR$ is
 Shimoda of Krull dimension $d-1$. Clearly, if $R$ is regular, $R/aR$
 is regular, and if $R$ is complete, $R/aR$ is complete. Moreover, if
 $k$ is infinite, there exists a superficial element
 $a\in\fm\setminus\fm^2$ and $e(R/aR)=e(R)$ (see, e.g.,
 \cite[Propositions~8.5.7 and 11.1.9]{sh}). Finally, suppose
 Theorem~\ref{shd-m} true for $2\leq d\leq 3$ and suppose there exists
 a Shimoda ring $R$ of Krull dimension $d\geq 4$. By successively
 factoring out appropriate elements, one would get a Shimoda ring of
 Krull dimension $\overline{d}=3$. But, by Theorem~\ref{shd-m}, in the
 case $2\leq d\leq 3$, one would deduce $\overline{d}=2$, a
 contradiction. Therefore, there can not exist Shimoda rings of Krull
 dimension $d\geq 4$ and one has only to prove Theorem~\ref{shd-m} for
 the case $2\leq d\leq 3$.
\end{proof}

Let us fix now some notations that will hold for the rest of the
section (see \cite{op}).

\begin{Remark}\label{onHN} {\rm 
Let $(R,\fm,k)$ be a Cohen-Macaulay local ring and let $x_1,x_2,x_3$
be a regular sequence in $R$. Take $a=(a_{1},a_{2},a_{3})\in \bn^{3}$
and $b=(b_{1},b_{2},b_{3})\in\bn^{3}$. Let $c=a+b$,
$c=(c_{1},c_{2},c_{3})$. Let $\cm$ be the matrix
\begin{eqnarray*}
\cm=\left(\begin{array}{ccc}x_{1}^{a_{1}}&x_{2}^{a_{2}}&x_{3}^{a_{3}}
  \\ x_{2}^{b_{2}}&x_{3}^{b_{3}}&x_{1}^{b_{1}}\end{array}\right), 
\end{eqnarray*}
and $v_{1}=x_{1}^{c_{1}}-x_{2}^{b_{2}}x_{3}^{a_{3}}$,
$v_{2}=x_{2}^{c_{2}}-x_{1}^{a_{1}}x_{3}^{b_{3}}$ and
$D=x_{3}^{c_{3}}-x_{1}^{b_{1}}x_{2}^{a_{2}}$, the $2\times 2$ minors
of $\cm$ up to a change of sign. Consider
$I=I_{2}(\cm)=(v_{1},v_{2},D)$, the determinantal ideal generated by
the $2\times 2$ minors of $\cm$. Then $I$ is a non-Gorenstein
height-unmixed ideal of height two, minimally generated by three
elements. For simplicity, $I$ will be called the HN (from Herzog
Northcott) ideal associated to $x_1,x_2,x_3$ and $a,b\in\bn^3$. We
will also set $m_1=c_{2}c_{3}-a_{2}b_{3}$,
$m_2=c_{1}c_{3}-a_{3}b_{1}$, $m_3=c_{1}c_{2}-a_{1}b_{2}$ and
$m(a,b)=(m_1,m_2,m_3)\in\bn^3$. Note that $m_i\geq 3$.  }\end{Remark}

\begin{proof}[Proof of Remark~\ref{onHN}] That $I$ is height-unmixed 
of height 2 follows from \cite[4.2 and 2.2]{op}. That $I$ is minimally
generated by 3 elements and is non-Gorenstein follows from
\cite[6.1]{op} and its proof, where a non-symmetric minimal resolution
of $R/I$ is shown.
\end{proof}

The proof of Theorem~\ref{shd-m} is divided into two parts. We first
state the regular case.

\begin{Proposition}\label{shd-reg}
Let $(R,\fm,k)$ be a regular local ring of Krull dimension $3$. Let
$x_1,x_2,x_3$ be a regular system of parameters in $R$ and
$a,b\in\bn^3$. Let $I$ be the HN ideal associated to $x_1,x_2,x_3$ and
$a,b$. If $\gcd(m(a,b))=1$, then $I$ is prime. In particular, $R$ is not
Shimoda.
\end{Proposition}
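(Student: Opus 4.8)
The plan is to prove that $R/I$ is a one-dimensional domain. Granting this, $I$ is a prime of $\height(I)=2$ (Remark~\ref{onHN}) with $I\neq\fm$, and since $I$ is minimally generated by the three minors $v_1,v_2,D$ we get $\mu(I)=3>2=\height(I)$, contradicting the Shimoda condition $\mu(\fp)=\height(\fp)$; hence $R$ is not Shimoda. To establish domain-ness I would first reduce to the completion: the map $R/I\hookrightarrow \widehat{R/I}=\hat R/I\hat R$ is faithfully flat, and injective by Krull, and a subring of a domain is a domain, so it suffices to show $I\hat R$ is prime. In the equicharacteristic case Cohen's structure theorem identifies $\hat R$ with the power series ring $K[[x_1,x_2,x_3]]$ over a coefficient field $K$, the regular system of parameters becoming the variables, which puts us in the geometric setting of monomial space curves.

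The generators are binomials, with exponent vectors $\ell_1=(c_1,-b_2,-a_3)$, $\ell_2=(-a_1,c_2,-b_3)$, $\ell_3=(-b_1,-a_2,c_3)$ in $\bz^3$; since $c=a+b$ one has $\ell_1+\ell_2+\ell_3=0$, so $\Lambda:=\bz\ell_1+\bz\ell_2$ is a rank-two lattice containing all three. The key computation is that the three $2\times 2$ minors of the matrix with rows $\ell_1,\ell_2$ are exactly $\pm m_1,\pm m_2,\pm m_3$; by the Smith normal form this makes $\gcd(m(a,b))=1$ equivalent to $\bz^3/\Lambda\cong\bz$ being torsion-free, i.e. to $\Lambda$ being saturated. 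Giving $x_i$ the weight $m_i$, each generator is weighted-homogeneous (one checks $c_1m_1=b_2m_2+a_3m_3$, and cyclically), so $I$ is contained in the kernel $\fp$ of the parametrization $\Phi\colon K[[x_1,x_2,x_3]]\to K[[t]]$, $x_i\mapsto t^{m_i}$. This $\fp$ is prime, with $K[[x_1,x_2,x_3]]/\fp\cong K[[t^{m_1},t^{m_2},t^{m_3}]]$ a one-dimensional domain, so $\height(\fp)=2$; and $\gcd(m(a,b))=1$ is exactly the condition that $\Phi$ be birational onto its image, so that $\cm$ is precisely Herzog's $2\times 3$ matrix presenting this monomial-curve prime in its non-complete-intersection form.

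It then remains to upgrade $I\subseteq\fp$ to $I=\fp$. Since $I$ is a perfect grade-two ideal, $R/I$ is Cohen--Macaulay and height-unmixed (Remark~\ref{onHN}), hence has no embedded primes; it therefore suffices to show that $\fp$ is the unique minimal prime of $I$ and that $IR_{\fp}=\fp R_{\fp}$. The latter is the statement that the monomial curve is smooth off the origin, so that two of $v_1,v_2,D$ form a regular system of parameters of the regular local ring $R_{\fp}$; saturation of $\Lambda$ excludes any competing lattice prime in the binomial primary decomposition of Eisenbud--Sturmfels, yielding uniqueness. I expect the equality $I=\fp$, rather than the containment, to be the main obstacle: one must simultaneously control every minimal prime of the binomial ideal $I$ and verify the generic complete-intersection property, and one must justify the reduction to a power series ring also in mixed characteristic, where $\hat R$ need not be literally $K[[x_1,x_2,x_3]]$ and the semigroup-ring identification has to be replaced by the lattice-ideal argument over the appropriate base.
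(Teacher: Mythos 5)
Your lattice-theoretic set-up is internally correct (the $2\times 2$ minors of the matrix with rows $\ell_1,\ell_2$ are indeed $\pm m_1,\pm m_2,\pm m_3$, so $\ell_i\cdot m(a,b)=0$, each generator is weighted-homogeneous, and $I$ lands inside the kernel $\fp$ of $x_i\mapsto t^{m_i}$), but the proposal has two genuine gaps, and they sit exactly at the points you yourself flag. First, the reduction to $K[[x_1,x_2,x_3]]$ via Cohen's structure theorem is only available when $R$ is equicharacteristic. A three-dimensional regular local ring may have mixed characteristic --- e.g.\ $R=T[[y_1,y_2]]$ with $T$ a complete discrete valuation ring of mixed characteristic and $x_1,x_2,x_3$ an arbitrary regular system of parameters --- and then $\widehat{R}$ has no coefficient field, so there is no parametrization $x_i\mapsto t^{m_i}$, no weighting, no semigroup ring, and no ``lattice-ideal argument over the appropriate base'': the toric formalism simply does not exist there, and no substitute is offered. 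The paper's proof, by contrast, is characteristic-free: for each associated prime $\fp$ of $I$ it passes to $D=R/\fp$, takes the integral closure $V$ of $D$ (a DVR, using that the complete ring $R$ is Nagata), and extracts $\nu(x_i)=lm_i$ from the three binomial relations and $\gcd(m(a,b))=1$; this valuation plays the role of your monomial parametrization but needs no field of coefficients.

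Second, even in equal characteristic, the upgrade from $I\subseteq\fp$ to $I=\fp$ --- which you correctly identify as the main obstacle --- is left resting on machinery that does not operate in this setting. Eisenbud--Sturmfels binomial primary decomposition is a theorem about polynomial rings over an algebraically closed field; here the ambient ring is a power series ring over an arbitrary residue field $k$ (possibly finite), its localization at $x_1x_2x_3$ is not a Laurent polynomial ring, and there is no prime correspondence that transports ``lattice primes'' into it. Likewise, the assertion that $\cm$ is ``precisely Herzog's matrix'' for the semigroup $\langle m_1,m_2,m_3\rangle$ would require proving that each $c_i$ is minimal with $c_im_i\in\langle m_j,m_k\rangle$ for arbitrary $a,b$; this is not automatic and is essentially equivalent to the equality $I=\fp$ being sought, while the generic smoothness claim $IR_{\fp}=\fp R_{\fp}$ presupposes that $V(I)$ is generically the monomial curve --- again close to circular. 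The paper closes this gap by a quantitative comparison: $\length_R(R/(x_1R+I))=m_1e(R)$ by the standard-base computation of Lemma~\ref{length-quotient}, whereas $\length_R(R/(x_1R+\fp))=rlm_1$ via the DVR $V$; since $e(R)=1$ for $R$ regular, these lengths coincide, and tensoring $0\to\fp/I\to R/I\to R/\fp\to 0$ with $R/x_1R$ together with Nakayama's Lemma forces $\fp/I=0$ for every associated prime $\fp$, giving primality and unmixedness at once. Some input of this kind (or a genuine extension of the toric argument to power series rings in all characteristics) is what your outline is missing.
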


Taking $a=(1,1,1)$ and $b=(2,1,1)$ in Proposition~\ref{shd-reg}, one
obtains the following result.

\begin{Corollary}\label{natural-candidate}
Let $(R,\fm,k)$ be a regular local ring of Krull dimension $3$. Let
$x_1,x_2,x_3$ be a regular system of parameters in $R$. Then
$I=(x_{1}^{3}-x_{2}x_{3},x_{2}^{2}-x_{1}x_{3},x_{3}^{2}-x_{1}^{2}x_{2})$
is a height two prime ideal minimally generated by three elements.
\end{Corollary}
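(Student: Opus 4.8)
The plan is to obtain the Corollary as a direct specialization of Proposition~\ref{shd-reg}: since a regular local ring is Cohen-Macaulay and a regular system of parameters $x_1,x_2,x_3$ is in particular a regular sequence, the HN machinery of Remark~\ref{onHN} applies, and all that remains is to identify the correct data and to verify the coprimality hypothesis. First I would set $a=(1,1,1)$ and $b=(2,1,1)$, so that $c=a+b=(3,2,2)$, and form the associated matrix
\[
\cm=\left(\begin{array}{ccc} x_1 & x_2 & x_3 \\ x_2 & x_3 & x_1^2 \end{array}\right).
\]
Computing its three $2\times 2$ minors up to sign according to Remark~\ref{onHN} yields $v_1=x_1^3-x_2x_3$, $v_2=x_2^2-x_1x_3$ and $D=x_3^2-x_1^2x_2$, which are exactly the three generators displayed in the statement; hence $I$ is precisely the HN ideal attached to $x_1,x_2,x_3$ and this choice of $a,b$.

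Next I would compute the triple $m(a,b)=(m_1,m_2,m_3)$ from its definition in Remark~\ref{onHN}. A short calculation gives $m_1=c_2c_3-a_2b_3=3$, $m_2=c_1c_3-a_3b_1=4$ and $m_3=c_1c_2-a_1b_2=5$, so that $m(a,b)=(3,4,5)$ and therefore $\gcd(m(a,b))=1$. This is the single hypothesis required to invoke Proposition~\ref{shd-reg}, which then yields immediately that $I$ is prime.

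Finally, that $I$ has height two and is minimally generated by the three elements $v_1,v_2,D$ is part of the general conclusion of Remark~\ref{onHN}, valid for every HN ideal regardless of the coprimality condition. Combining the primeness supplied by the Proposition with these structural properties from the Remark completes the argument. I expect no genuine obstacle at the level of the Corollary itself: the entire difficulty is concentrated in Proposition~\ref{shd-reg}, while the Corollary amounts to the bookkeeping of a single numerical substitution, the crux of which is the trivial verification that $\gcd(3,4,5)=1$.
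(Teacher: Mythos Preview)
Your proof is correct and is exactly the paper's approach: the paper simply states that the Corollary follows by taking $a=(1,1,1)$ and $b=(2,1,1)$ in Proposition~\ref{shd-reg}, and you have spelled out the verification that this choice yields the displayed ideal and satisfies $\gcd(m(a,b))=\gcd(3,4,5)=1$.
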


For the case of small multiplicity, we have the following. As
above, we consider the HN ideal associated to $a=(1,1,1)$ and
$b=(2,1,1)$.

\begin{Proposition}\label{shd-mc}
Let $(R,\fm,k)$ be a complete Gorenstein local domain of Krull
dimension $3$. Suppose that, in addition, $R\supset k$, $k$ is
infinite and $e(R)\leq 3$. Let $(x_1,x_2,x_3)R$ be a minimal reduction
of $\fm$. Then there is a minimal prime over
$I=(x_{1}^{3}-x_{2}x_{3},x_{2}^{2}-x_{1}x_{3},x_{3}^{2}-x_{1}^{2}x_{2})$
which is not Gorenstein. In particular, $R$ is not Shimoda.
\end{Proposition}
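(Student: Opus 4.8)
The plan is to use completeness and the minimal reduction to sit $R$ over a regular subring, and then transport the non-Gorenstein monomial curve of the regular case. Since $R$ is complete, contains the infinite field $k$, and $(x_1,x_2,x_3)R$ is a minimal reduction of $\fm$, Cohen's theorem gives a regular subring $S=k[[x_1,x_2,x_3]]\subseteq R$ over which $R$ is module-finite; because $R$ is Cohen--Macaulay of dimension $\dim S$, the Auslander--Buchsbaum formula forces $R$ to be a free $S$-module, of rank $\length(R/(x_1,x_2,x_3)R)=e(R)\leq 3$. The entries of $\cm$ lie in $S$, so $I=I_S R$, where $I_S=I_2(\cm)\subseteq S$ is exactly the height-two prime of Proposition~\ref{shd-reg} (here $m(a,b)=(3,4,5)$, with $\gcd=1$) and $S/I_S\cong B:=k[[t^3,t^4,t^5]]$, the non-Gorenstein one-dimensional monomial curve ring.

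First I would record the structure this forces on $R/I$. Tensoring a free $S$-presentation of $R$ with $B$ shows that $C:=R/I=R\otimes_S B$ is module-finite and free of rank $e(R)\leq 3$ over $B$; by the Hochster--Eagon theorem $I$ has the generic determinantal height two, so $C$ is Cohen--Macaulay of dimension one, non-Gorenstein of type two as in Remark~\ref{onHN}. Since $S$ is normal and $R$ a domain, every minimal prime $\fp$ over $I$ contracts to $I_S$ and has height two, whence each $R/\fp$ is a one-dimensional complete local domain finite over $B$, and these branches carry between them the full generic rank over $B$. The identity $\omega_C=\mathrm{Hom}_B(C,\omega_B)\cong\omega_B^{\oplus e(R)}$ then gives precise control of types along the way.

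Next comes the reduction of the Shimoda conclusion. If $R$ were Shimoda, a height-two prime $\fp$ would be generated by a regular sequence of length two, so $R/\fp$ would be a codimension-two complete intersection in the Gorenstein ring $R$, hence Gorenstein; therefore it suffices to produce one minimal prime $\fp$ over $I$ with $R/\fp$ non-Gorenstein, since then $\mu(\fp)\geq 3>\height(\fp)=2$. When $I$ is prime this is immediate, as $R/\fp=C$ is non-Gorenstein by Remark~\ref{onHN}; the substance lies in the case where $I$ decomposes.

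The hard part, which I would attack last, is to guarantee a non-Gorenstein branch when $I$ is not prime. The obstacle is that non-Gorensteinness of $C$ need not descend to its components: a priori every $R/\fp_i$ could be Gorenstein (even a discrete valuation ring) with the whole type-two defect of $C$ concentrated in the gluing of the branches, and ruling this out is exactly where the bound $e(R)\leq 3$ must enter. I would bound the total non-Gorenstein defect by a value-semigroup and conductor count against the normalization $\prod_i\overline{R/\fp_i}$ of $C$, together with the rank additivity $\sum_i [\,\mathrm{Frac}(R/\fp_i):\mathrm{Frac}(B)\,]\cdot\length(C_{\fp_i})=e(R)\leq 3$, the non-symmetry of the value semigroup $\langle 3,4,5\rangle$ of $B$, and the explicit non-symmetric minimal resolution of \cite{op}. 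Since a symmetric (Gorenstein) configuration of branches of so small a total degree cannot reproduce the type-two defect imposed by $B$, at least one $R/\fp_i$ must fail to be Gorenstein, contradicting the Shimoda hypothesis.
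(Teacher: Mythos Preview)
Your setup and reductions are correct and coincide with the paper's: the Cohen subring $S=k[[x_1,x_2,x_3]]$, $R$ free of rank $e=e(R)\le 3$ over $S$, $B=S/I_S\cong k[[t^3,t^4,t^5]]$, $C=R/I$ free of rank $e$ over $B$, and the rank-additivity $\sum_i e'_i\,\ell_i=e$ with $e'_i=[\mathrm{Frac}(R/\fp_i):\mathrm{Frac}(B)]$ and $\ell_i=\length(C_{\fp_i})$. You also correctly locate the crux: ruling out the possibility that every branch $R/\fp_i$ is Gorenstein.

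But your attack on that crux is only a gesture, and the framing via a ``type-two defect of $C$'' does not work: the Cohen--Macaulay type of a one-dimensional local ring is not controlled by the types of its branches (the three coordinate axes in affine $3$-space give a type-two ring whose branches are all regular), so ``all branches Gorenstein'' does not imply ``$C$ Gorenstein'', and the paper never uses the type of $C$ at this point. What you are missing is a sharp lemma that plugs directly into your own additivity formula: \emph{if a branch $D=R/\fp$ is Gorenstein then $e'=\rank_B D\ge 2$}. The paper proves this by the conductor computation you only allude to. If $e'=1$ then $D$ and $B$ share the quotient field $k((t))$ and the normalization $V=k[[t]]$; using that $x_1D$ is a reduction of $\fm_D$ and $x_1V=t^3V$ one checks $\fm_D=D:_DV=t^3V$, whence the Gorenstein identity $2\,\length_D(D/(D:_DV))=\length_D(V/(D:_DV))$ reads $2=3\cdot[k_V:k]$, an impossibility. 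With this in hand your additivity finishes instantly: if every branch is Gorenstein then $2\sum_i\ell_i\le\sum_i e'_i\ell_i=e\le 3$, so there is a single minimal prime with $\ell=1$, i.e.\ $I=\fp$, contradicting that $I$ is not Gorenstein. It is this individual-branch conductor step, not a global defect count for $C$, where both the bound $e(R)\le 3$ and the arithmetic of $\langle 3,4,5\rangle$ actually enter.
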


Let us give now a proof of Theorem~\ref{shd-m} using
Propositions~\ref{shd-reg} and \ref{shd-mc}.

\begin{proof}
[Proof of Theorem~\ref{shd-m} using Propositions~\ref{shd-reg},
  \ref{shd-mc}] Let $R$ be a Shimoda ring of Krull dimension $d\geq 2$
as in Theorem~\ref{shd-m}. By Remark~\ref{red-d}, one can suppose that
$2\leq d\leq 3$. If $R$ is regular, by Proposition~\ref{shd-reg},
$d\neq 3$. Suppose now that $R$ is complete, $R\supset k$, $k$
infinite and $e(R)\leq 3$. By Remark~\ref{ufd}, $R$ is Cohen-Macaulay,
and since $R$ is complete, it admits a canonical module (see, e.g.,
\cite[3.3.8]{bh}). By Remark~\ref{ufd} again, $R$ is a UFD. Hence $R$
is Gorenstein (see, e.g., \cite[3.3.19]{bh}). Therefore, by
Proposition~\ref{shd-mc}, $d\neq 3$.
\end{proof}

Before proving Proposition~\ref{shd-reg}, we note the following
reasonably elementary fact.

\begin{Lemma}\label{length-quotient}
Let $(S,\fn)$ be a Cohen-Macaulay local ring of Krull dimension
$2$. Let $x,y$ be a system of parameters of $S$. Let $p,q,r,s\in\bn$,
with $1\leq r<p$ and $1\leq s<q$.  Then
\begin{eqnarray*}
\length_S(S/(x^p,y^q,x^ry^s))=[pq-(p-r)(q-s)]\cdot \length_S(S/(x,y)).
\end{eqnarray*}
\end{Lemma}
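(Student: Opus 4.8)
The plan is to reduce everything to the multiplicativity of length along a regular sequence, namely the standard fact that $\length_S(S/(x^a,y^b))=ab\cdot\length_S(S/(x,y))$ for all $a,b\geq 1$. Since $S$ is Cohen-Macaulay of dimension two and $x,y$ is a system of parameters, $x,y$ is a regular sequence and $(x,y)$ is $\fn$-primary, so every module appearing below has finite length. Write $L=\length_S(S/(x,y))$.

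First I would realise $S/(x^p,y^q,x^ry^s)$ as the cokernel of multiplication by $x^ry^s$ on $M=S/(x^p,y^q)$, giving the four-term exact sequence
$$
0\longrightarrow \frac{(x^p,y^q):x^ry^s}{(x^p,y^q)}\longrightarrow M\stackrel{x^ry^s}{\longrightarrow} M\longrightarrow S/(x^p,y^q,x^ry^s)\longrightarrow 0 .
$$
Because $M$ has finite length, additivity of length forces the kernel and cokernel to have the same length, whence $\length_S(S/(x^p,y^q,x^ry^s))=\length_S\big(((x^p,y^q):x^ry^s)/(x^p,y^q)\big)$.

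The key step is to identify the colon: I claim $(x^p,y^q):x^ry^s=(x^{p-r},y^{q-s})$. The inclusion $\supseteq$ is immediate from $x^{p-r}\cdot x^ry^s=x^py^s$ and $y^{q-s}\cdot x^ry^s=x^ry^q$. For $\subseteq$ I would factor the computation as $(x^p,y^q):x^ry^s=((x^p,y^q):x^r):y^s$ and handle each step using Cohen-Macaulayness: since $x,y$ is a regular sequence in the Cohen-Macaulay ring $S$, every reordering and every power is again a regular sequence, so $x$ is a nonzerodivisor modulo $y^q$ and $y$ is a nonzerodivisor modulo $x^{p-r}$. Writing $zx^r=ax^p+by^q$ gives $x^r(z-ax^{p-r})\in(y^q)$, and cancelling the nonzerodivisor $x^r$ modulo $y^q$ yields $(x^p,y^q):x^r=(x^{p-r},y^q)$; the symmetric cancellation of $y^s$ modulo $x^{p-r}$ then gives $(x^{p-r},y^q):y^s=(x^{p-r},y^{q-s})$.

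Finally I would combine the pieces. As $(x^p,y^q)\subseteq(x^{p-r},y^{q-s})$, the quotient length is the difference of the two lengths, and the multiplicativity formula gives
$$
\length_S(S/(x^p,y^q,x^ry^s))=\length_S(S/(x^p,y^q))-\length_S(S/(x^{p-r},y^{q-s}))=\big[pq-(p-r)(q-s)\big]\,L,
$$
which is the assertion. The main obstacle is the colon identity; everything else is bookkeeping with additivity of length, and the nonzerodivisor cancellations are the only place where the Cohen-Macaulay hypothesis is genuinely used. Care is needed there precisely because $S$ is an abstract local ring rather than $k[x,y]$, so one cannot simply invoke the monomial combinatorics of the polynomial ring, even though the count $pq-(p-r)(q-s)$ is exactly the number of "surviving" monomials in that model.
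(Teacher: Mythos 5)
Your proposal is correct, and it takes a genuinely different route from the paper's. You reduce everything to the colon identity $(x^p,y^q):_Sx^ry^s=(x^{p-r},y^{q-s})$, proved by two successive nonzerodivisor cancellations (each step valid because powers and reorderings of the regular sequence $x,y$ remain regular sequences), and then combine the four-term exact sequence for multiplication by $x^ry^s$ on $S/(x^p,y^q)$ with the standard multiplicativity $\length_S(S/(x^a,y^b))=ab\cdot\length_S(S/(x,y))$; all the steps check out, including the needed inequalities $p-r\geq 1$, $q-s\geq 1$ and the finiteness of all lengths involved. The paper argues instead through the associated graded ring $G\cong (S/I)[X,Y]$ of $I=(x,y)$: it first shows $\length_S(S/J)=\length_S(G/J^{*})$ for $J=(x^p,y^q,x^ry^s)$, then invokes the Kiyek--St\"uckrad result that $x^p,y^q,x^ry^s$ is an $I$-standard base of $J$, so that $J^{*}$ is the corresponding monomial ideal in $(S/I)[X,Y]$, and finally counts the surviving monomials $X^iY^j$ to obtain $pq-(p-r)(q-s)$. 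Your argument is more elementary and essentially self-contained; its one external input (length-multiplicativity for powers of a regular sequence) is classical and could even be reproved by the same colon-and-exact-sequence technique. What the paper's heavier route buys is generality and continuity with the rest of the section: the standard-base mechanism works verbatim for any monomial ideal on a regular sequence of any length (where an explicit colon computation like yours becomes combinatorially unwieldy), and it is precisely what motivates Remark~\ref{standard-base} and the generalized Rees theorem, Theorem~\ref{rees-theorem}, that follow Lemma~\ref{length-quotient} in the text.
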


\begin{proof}
Let $I=(x,y)$. Since $I$ is generated by a regular sequence, there is
a natural graded isomorphism $(S/I)[X,Y]\cong \graded(I)$, sending $X$
to $x+I^{2}$ and $Y$ to $y+I^{2}$, between the polynomial ring in two
indeterminates $X,Y$ over $S/I$ and the associated graded ring
$G=\graded(I)=\oplus _{n\geq 0}I^n/I^{n+1}$ of $I$. If $z\in S$, let
$z^{*}$ denote its initial form in $G$, i.e., $z^{*}=z+I^{m}$, where
$z\in I^{m-1}\setminus I^{m}$, and $0^{*}=0$. In particular,
$(x^{*})^{n}=(x^{n})^{*}$ and $(y^{*})^{m}=(y^{m})^{*}$, for all
$n,m\geq 1$. If $J$ is an ideal of $S$, let $J^{*}$ denote the
homogeneous ideal of $G$ generated by all the initial forms of
elements of $J$. One has $J^{*}_{n}=(J\cap I^{n})+I^{n+1}/I^{n+1}$ for
$n\geq 0$ (see, e.g., \cite{vv}).

Now take $J=(x^p,y^q,x^ry^s)\subset I$. Observe that, for $n\geq p+q$,
$I^{n}\subseteq J$ and $(J^{*})_n=G_n$. For each $n\geq 0$, consider
the short exact sequences of $S$-modules:
\begin{eqnarray*}
0\to (J+I^{n+1})/J\to (J+I^{n})/J\to (J+I^{n})/(J+I^{n+1})\to 0.
\end{eqnarray*}
Hence
$\length_{S}(S/J)=\sum_{n=0}^{p+q-1}\length_S((J+I^{n})/(J+I^{n+1}))$. But
\begin{eqnarray*}
\frac{J+I^{n}}{J+I^{n+1}}\cong \frac{I^{n}}{I^{n}\cap
  (J+I^{n+1})}\cong \frac{I^{n}}{(J\cap I^{n})+I^{n+1}}\cong
\frac{G_{n}}{J^{*}_{n}}.
\end{eqnarray*}
Thus 
\begin{eqnarray*}
\length_S(S/J)=\sum_{n=0}^{p+q-1}\length_S(G_{n}/J^{*}_{n})=\sum_{n\geq
  0}\length_{S}(G_{n}/J^{*}_{n})=\length_S(G/J^{*}).
\end{eqnarray*}

Let $L\subseteq J^{*}$ be the ideal generated by the initial forms of
$x^{p}$, $y^{q}$ and $x^{r}y^{s}$ in $G$. Since the regular sequence
$x,y$ is permutable, by \cite[Remark~4(4)]{ks}, $L=J^{*}$.  Hence
$\length_S(G/J^{*})= \length_S(G/L)<\infty$. Through the isomorphism
$(S/I)[X,Y]\cong \graded(I)$, one sees that $G/L$ is isomorphic to the
free $S/I$-module with basis $X^{i}Y^{j}$, where $(i,j)\in \{ 0,\ldots
,p-1\}\times \{0,\ldots ,s-1\}$ or $(i,j)\in \{ 0,\ldots ,r-1\}\times
\{s,\ldots ,q-1\}$.  Therefore $\length_{S}(G/L)=[pq-(p-r)(q-s)]\cdot
\length_S(S/I)$.
\end{proof}

\begin{Remark}\label{standard-base} {\rm The result of K. Kiyek and 
J. St\"uckrad (cf. \cite[Remark~4(4)]{ks}) that we have used in the
proof of Lemma~\ref{length-quotient}, which deals only with the case
where the regular sequence at hand is permutable, says precisely that
$x^{p},y^{q},x^{r}y^{s}$ is an $I$-standard base of
$J=(x^{p},y^{q},x^{r}y^{s})$ (see, e.g., \cite[\S~13]{hio}). In this
respect, one can prove the following fact, which uses a generalization
of a classic theorem of Rees
(cf. \cite[Theorem~1.1.7]{bh}).}\end{Remark}

\begin{Theorem}\label{rees-theorem}
Let $R$ be a commutative ring and let $\underline{z}=z_1,\ldots ,z_n$
be an $R$-sequence. Set $I=(z_1,\ldots ,z_n)$. Let
$\underline{Z}=Z_1,\ldots ,Z_n$ be indeterminates over $R$.
\begin{itemize}
\item[$(a)$] If $F(\underline{Z})\in R[\underline{Z}]$ is homogeneous
  of degree $d$ and $F(\underline{z})\in I^{d+i}$, then
  $F(\underline{Z})\in I^{i}[\underline{Z}]$.
\item[$(b)$] If $m_1,\ldots ,m_s$ is a finite set of monomials in
  $\underline{z}$, then $m_1,\ldots ,m_s$ is an $I$-standard base of
  $J=(m_1,\ldots,m_s)$.
\end{itemize}
\end{Theorem}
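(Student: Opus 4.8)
The plan is to establish $(a)$ first, by induction on $i$, and then to read off $(b)$ from it. The cases $i\le 1$ of $(a)$ are exactly the classical theorem of Rees (\cite[Theorem~1.1.7]{bh}): for an $R$-sequence $\underline{z}$ the canonical surjection $(R/I)[\underline{Z}]\to \graded(I)$ sending $Z_j$ to $z_j+I^2$ is an isomorphism, so a homogeneous $F$ of degree $d$ with $F(\underline{z})\in I^{d+1}$ has all its coefficients in $I$. For the inductive step I would assume the statement for $i-1$ and take $F$ homogeneous of degree $d$ with $F(\underline{z})\in I^{d+i}$; by induction the coefficients $c_\alpha$ of $F$ already lie in $I^{i-1}$, and the task is to push them into $I^{i}$.

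To do this I would pass to initial forms in $G=\graded(I)$. Writing $\hat{c}_\alpha$ for the class of $c_\alpha$ in $I^{i-1}/I^{i}=G_{i-1}$ and using that $z^{\alpha}+I^{d+1}=Z^{\alpha}$ under the Rees isomorphism, the multiplication rule in $G$ shows that the image of $F(\underline{z})=\sum_\alpha c_\alpha z^{\alpha}$ in $G_{d+i-1}$ is $\sum_\alpha \hat{c}_\alpha\, Z^{\alpha}$. Since $F(\underline{z})\in I^{d+i}$, this image vanishes, i.e.\ $\sum_\alpha \hat{c}_\alpha\, Z^{\alpha}=0$ in the polynomial ring $(R/I)[\underline{Z}]$. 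The point to settle is then how such a relation among the monomials $Z^{\alpha}$ forces the coefficients into $I^{i}$, and this is the main obstacle: it is genuinely a question about syzygies. A relation $\sum_\alpha \hat{c}_\alpha Z^{\alpha}=0$ in $(R/I)[\underline{Z}]$ is a combination of the trivial (Koszul) syzygies arising from $Z_kZ_l=Z_lZ_k$, and these lift to the genuine relations $z_kz^{\beta}-z_lz^{\beta'}=0$ in $R$, whose coefficients lie in $I$. Tracking these liftings is precisely what corrects the chosen representation of $F(\underline{z})$, raises the order, and closes the induction; it is also the only place where the hypothesis that $\underline{z}$ is a regular sequence is used in an essential way.

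Finally I would deduce $(b)$. By the Rees isomorphism $G\cong (R/I)[\underline{Z}]$ the initial form $m_j^{*}$ corresponds to the monomial $Z^{\gamma_j}$, where $m_j=z^{\gamma_j}$ and $v_j=|\gamma_j|$ is the $I$-order of $m_j$ (that $\mathrm{ord}_I(z^{\gamma_j})=|\gamma_j|$ is itself part of Rees's theorem, since $Z^{\gamma_j}\ne 0$ in $G_{v_j}$). Hence $(m_1^{*},\dots,m_s^{*})\subseteq J^{*}$ automatically, and it remains to prove the reverse inclusion. Given $f\in J\cap I^{n}$, written $f=\sum_j a_j m_j$, I would compare initial forms at the level $n'=\min_j \mathrm{ord}_I(a_j m_j)$. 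If $n'=\mathrm{ord}_I(f)$ then $f^{*}=\sum_j \overline{a_j}\, m_j^{*}\in (m_1^{*},\dots,m_s^{*})$, with $\overline{a_j}$ the relevant initial form of $a_j$. If instead $n'<\mathrm{ord}_I(f)$, the initial forms cancel and yield a relation $\sum_j \overline{a_j}\, Z^{\gamma_j}=0$ in $(R/I)[\underline{Z}]$.

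Here the crucial feature of \emph{monomials} enters: the syzygies of $Z^{\gamma_1},\dots,Z^{\gamma_s}$ are generated by the binomial Taylor syzygies $u_{jk}Z^{\gamma_j}-u_{kj}Z^{\gamma_k}$, where $u_{jk}=\mathrm{lcm}(m_j,m_k)/m_j$ is again a monomial in the $Z$'s, and each of these lifts verbatim to a true relation $u_{jk}m_j-u_{kj}m_k=0$ in $R$. Using these liftings to modify the $a_j$ without changing $f$, one strictly increases $n'$, and after finitely many steps reaches the case $n'=\mathrm{ord}_I(f)$; hence $f^{*}\in (m_1^{*},\dots,m_s^{*})$, so $J^{*}=(m_1^{*},\dots,m_s^{*})$ and $m_1,\dots,m_s$ is an $I$-standard base of $J$, as claimed. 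The decisive point throughout is that for monomial generators every obstructing syzygy is binomial and lifts to $R$, so that no genuine non-Koszul relation can ever appear.
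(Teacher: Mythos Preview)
There is a genuine gap in your argument for $(a)$, and in fact $(a)$ as stated is false for $i\ge 2$. Take $R=k[X,Y]$, $z_1=X$, $z_2=Y$, $I=(X,Y)$, and $F(Z_1,Z_2)=YZ_1-XZ_2$: then $F$ is homogeneous of degree $1$ and $F(X,Y)=0\in I^{1+i}$ for every $i$, yet the coefficients $Y,-X$ lie only in $I$, not in $I^2$. The step where your argument breaks is the phrase ``corrects the chosen representation of $F(\underline z)$'': lifting the Koszul syzygies does let you replace the coefficients $c_\alpha$ by new ones lying one step deeper in the $I$-adic filtration, but this produces a \emph{different} polynomial $F'$ with $F'(\underline z)=F(\underline z)$, not the original $F$. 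What your syzygy-lifting actually establishes is the weaker (and true) assertion that \emph{some} homogeneous $G$ of degree $d$ with $G(\underline z)=F(\underline z)$ has coefficients in $I^{i}$. The paper's own inductive proof slips at the analogous place: after forming $H(\underline Z)=\sum_{\lambda,\mu}b_{\lambda,\mu}Z^{\lambda+\mu}$ and invoking Rees' theorem, one learns only that each coefficient of $H$, namely the sum $\sum_{\lambda+\mu=\xi}b_{\lambda,\mu}$, lies in $I$, not that each individual $b_{\lambda,\mu}$ does; so the conclusion $a_\lambda\in I^{i}$ does not follow.

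Your argument for $(b)$, by contrast, is sound and does not rely on $(a)$. The key point---that the first syzygies of monomials $Z^{\gamma_1},\dots,Z^{\gamma_s}$ in $(R/I)[\underline Z]$ are generated by the binomial Taylor relations over any base ring---is correct, and since these relations lift verbatim to identities in $R$ the order $n'$ strictly increases at each step; termination is guaranteed because $n'\le\mathrm{ord}_I(f)$ throughout. This is a genuinely different route from the paper, which attempts to read off the equal-degree case $t=1$ from $(a)$ and then runs an induction on the number $t$ of distinct degrees via the filtration $J_1\subset J_1+J_2\subset\cdots$. Your direct syzygy-lifting is the more robust of the two precisely because it bypasses $(a)$; the paper's degree-filtration induction is elegant once the $t=1$ case is in hand, but that base case rests on the flawed $(a)$ and would itself need to be re-established by essentially the argument you give.
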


\begin{proof}
If $i=0$, the result is trivial. Rees' Theorem is the case
$i=1$. Suppose $i>1$ and that we have established the result in the
case where $i$ is replaced by $i-1$. Suppose then that
$F(\underline{Z})\in R[\underline{Z}]$ is homogeneous of degree $d$
and $F(\underline{z})\in I^{d+i}$. Then $F(\underline{z})\in
I^{d+i-1}$, so by the inductive hypothesis the coefficients of $F$ lie
in $I^{i-1}$. Thus, with $\lambda=(\lambda_1,\ldots,\lambda_n)$,
\begin{eqnarray*}
F(\underline{Z})=\sum_{\lambda_1+\ldots
  +\lambda_n=d}a_{\lambda}Z_1^{\lambda_1}\ldots Z_n^{\lambda_n},
\end{eqnarray*}
for $a_{\lambda}\in I^{i-1}$. Hence, for each such $\lambda$,
\begin{eqnarray*}
a_{\lambda}=\sum_{\mu_1+\ldots
  +\mu_n=i-1}b_{\lambda,\mu}z_1^{\mu_1}\ldots z_n^{\mu_n},
\end{eqnarray*}
for elements $b_{\lambda,\mu}$ in $R$, with $\mu=(\mu_1,\ldots
,\mu_n)$ and $\mu_1+\ldots +\mu_n=i-1$. Let $\xi=(\xi_1,\ldots
,\xi_n)=\lambda+\mu$, so that $\xi_1+\ldots +\xi_n=d+i-1$, and let
\begin{eqnarray*}
H(\underline{Z})=\sum_{(\lambda_1+\ldots
  +\lambda_n=d)}\sum_{(\mu_1+\ldots
  +\mu_n=i-1)}b_{\lambda,\mu}Z_1^{\xi_1}\ldots Z_n^{\xi_n}\in
R[\underline{Z}].
\end{eqnarray*}
Then $H$ is homogeneous of degree $d+i-1$ and
$H(\underline{z})=F(\underline{z})\in I^{d+i}$. By Rees' Theorem, the
coefficients $b_{\lambda,\mu}$ of $H$ lie in $I$, and it follows that
each $a_{\lambda}$ lies in $I^{i}$, which proves $(a)$.

Let $m_1,\ldots ,m_s$ be monomials in $\underline{z}$, where
$m_i=z_1^{\alpha_{1,i}}\ldots z_n^{\alpha_{n,i}}$, for $i=1,\ldots
,s$. Let $\alpha_i=\alpha_{1,i}+\ldots +\alpha_{n,i}$ be the
``degree'' of $m_i$, which is well-defined due to the standard
isomorphism $(R/IR)[Z_1,\ldots ,Z_n]\to \graded(I)=\oplus_{d\geq
  0}I^{d}/^{d+1}$, sending $Z_i$ to $z_i+I^2$. Let $\delta_1<\ldots
<\delta_t$ be the degrees occurring in
$\{\alpha_{1},\ldots,\alpha_{s}\}$, $t\geq 1$. Let $J_{i}$ denote the
ideal of $R$ generated by those monomials among $m_{1},\ldots,m_{s}$
that have degree $\delta_{i}$. We must show that, for $d\geq 0$,
\begin{equation*}
J\cap I^{d}=m_{1}I^{d-\alpha_{1}}+\ldots +m_{s}I^{d-\alpha_{s}},
\end{equation*}
or equivalently,
\begin{equation}\label{c-stand-base}
J\cap I^{d}=J_{1}I^{d-\delta_{1}}+\ldots +J_{t}I^{d-\delta_{t}},
\end{equation}
understanding that $I^{r}=R$ whenever $r\leq 0$ and noting that $I$
and $J$ are not comaximal (see \cite[p.~94, lines~23-24]{vv}). We prove
(\ref{c-stand-base}) by induction on $t\geq 1$.

Item $(a)$ easily yields the case where $t=1$, i.e., where all the
monomials have the same degree. So we suppose that $t\geq 2$ and that
(\ref{c-stand-base}) holds when $t$ is replaced by $t-1$. Suppose
first of all that $d\leq \delta _{t}$. Then, by induction,
\begin{equation*}
J\cap I^{d}=(J_{1}+\ldots +J_{t})\cap I^{d}=((J_{1}+\ldots
+J_{t-1})\cap I^{d})+J_{t}=J_{1}I^{d-\delta_{1}}+\ldots
+J_{t-1}I^{d-\delta_{t-1}}+J_{t}I^{d-\delta_{t}},
\end{equation*}
since $I^{d-\delta_{t}}=R$ in this case. Finally, suppose on the other
hand that $d>\delta_{t}$. Then
\begin{equation*}
J\cap I^{d}=J\cap I^{\delta_{t}}\cap I^{d}=(J_{1}+\ldots +J_{t})\cap
I^{\delta_{t}}\cap I^{d}=(J_{1}I^{\delta_{t}-\delta_{1}}+\ldots +J_{t})\cap I^{d},
\end{equation*}
by the case just considered. But the ideal
$J_{1}I^{\delta_{t}-\delta_{1}}+\ldots +J_{t}$ is generated by
monomials in $\underline{z}$ of degree $\delta_{t}$, so by the
case $t=1$,
\begin{equation*}
(J_{1}I^{\delta_{t}-\delta_{1}}+\ldots +J_{t})\cap
  I^{d}=(J_{1}I^{\delta_{t}-\delta_{1}}+\ldots
  +J_{t})I^{d-\delta_{t}}=J_{1}I^{d-\delta_{1}}+\ldots
  +J_{t}I^{d-\delta_{t}},
\end{equation*}
as required.
\end{proof}

To prove Proposition~\ref{shd-reg}, we are going to use formulae from
the theory of multiplicities (see, e.g., \cite[Chapter~4, Section
  6]{bh} and \cite[Chapter~11, Sections 1 and 2]{sh}).

\begin{proof}[Proof of Proposition~\ref{shd-reg}]
Let $\widehat{R}$ be the completion of $R$. Since $R$ is regular and
local with maximal ideal $\fm$ generated by the regular system of
parameters $x_1,x_2,x_3$, then $\widehat{R}$ is a regular local ring
with maximal ideal $\fm\widehat{R}=(x_1,x_2,x_3)\widehat{R}$,
generated by the regular system of parameters $x_1,x_2,x_3$
(considered in $\widehat{R}$). Let $I=(v_1,v_2,D)$ be the HN ideal
associated to $x_1,x_2,x_3$ and $a,b\in\bn^3$ in $R$. Then
$I\widehat{R}=(v_1,v_2,D)\widehat{R}$ is the HN ideal associated to
$x_1,x_2,x_3$ and $a,b\in\bn^3$, regarded in $\widehat{R}$. Since
$I=I\widehat{R}\cap R$, if we prove that $I\widehat{R}$ is prime, then
$I$ will be prime too. Therefore we can suppose that $R$ is complete.

Until further notice we assume only that $R$ is a complete
Cohen-Macaulay local ring of dimension $3$ and that $x_1,x_2,x_3$
generates a reduction of $\fm$. We do this so that we can avoid
repeating core aspects of the argument when we come to prove
Proposition~\ref{shd-mc}.

Take $\fp$ any associated prime of $I$, hence of height 2.  Let
$D=R/\fp$. Then $D$ is a one-dimensional Noetherian domain. Let $V$ be
its integral closure in its quotient field $K$. Since $R$ is a
complete local ring, $R$ is a Nagata ring (see, e.g.,
\cite[p.~234]{m1}) and hence $V$ is a finite $D$-module and, in
particular, $V$ is Noetherian as a ring. Since $D$ is a Noetherian
complete local domain, its integral closure $V$ is a local ring (see,
e.g., \cite[Corollary~6.1, p.~116]{di}). Therefore, $V$ is a
one-dimensional Noetherian local integrally closed ring, hence a DVR.

Let $\nu$ denote the corresponding valuation on $K$. By abuse of
notation, let $x_i$ denote the image in $V$ of each $x_i$ and set
$\nu_i=\nu(x_i)$. In $V$, $x_1^{c_1}=x_2^{b_2}x_3^{a_3}$,
$x_2^{c_2}=x_1^{a_1}x_3^{b_3}$ and
$x_3^{c_3}=x_1^{b_1}x_2^{a_2}$. Applying $\nu$ to these equalities,
one gets the following system of equations:
\begin{eqnarray*}
\left. \begin{array}{ccc}c_{1}\nu_{1}&=&b_{2}\nu_{2}+a_{3}\nu_{3}\\
c_{2}\nu_{2}&=&a_{1}\nu_{1}+b_{3}\nu_{3}\\
c_{3}\nu_{3}&=&b_{1}\nu_{1}+a_{2}\nu_{2}\end{array} \right\},
\end{eqnarray*}
the third equation (say) being the sum of the first two. So we reduce
to a system of two linearly independent equations, considered over
$\bq$, whose solution is $(\nu_1,\nu_2,\nu_3)=q\cdot m(a,b)$, for some
$q\in\bq$, $q=u/v$, $u,v\in\bn$ (see \cite[4.4]{op}).

Set $l=\gcd(\nu_1,\nu_2,\nu_3)\in\bn$. Since
$v(\nu_1,\nu_2,\nu_3)=u(m_1,m_2,m_3)$ and $\gcd(m_1,m_2,m_3)=1$, then
$vl=u$ and $l=q\in\bn$. Hence $\nu_i=lm_i$.

Clearly $x_1R+I=(x_1,x_2^{c_2},x_3^{c_{3}},x_2^{b_2}x_3^{a_3})$ is an
$\fm$-primary ideal of $R$. Set $S=R/x_1R$, and (by abuse of notation)
consider $x_2,x_3$ as a regular sequence in $S$ and a system of
parameters of $S$. Since $R/(x_1R+I)\cong
S/(x_2^{c_2},x_3^{c_{3}},x_2^{b_2}x_3^{a_3})S$ (and $x_1R\subset
\mbox{Ann}_{R}(S)$), then $\length_R(R/(x_1R+I))=
\length_{S}(S/(x_2^{c_2},x_3^{c_{3}},x_2^{b_2}x_3^{a_3})S)$, which by
Lemma~\ref{length-quotient}, is equal to $m_1\cdot
\length_S(S/(x_2,x_3)S)$. But $S/(x_2,x_3)S\cong R/(x_1,x_2,x_3)$, so
\begin{eqnarray*}
\length_S(S/(x_2,x_3)S)=\length_R(R/(x_1,x_2,x_3))= e_R(x_1,x_2,x_3),
\end{eqnarray*}
because $R$ is Cohen-Macaulay of Krull dimension $d=3$ and
$(x_1,x_2,x_3)$ is an $\fm$-primary ideal
(\cite[11.1.10]{sh}). Moreover, as $(x_1,x_2,x_3)$ is a reduction of
$\fm$, then $e_R(x_1,x_2,x_3)=e(R)$ (\cite[1.2.5 and 11.2.1]{sh}).

Since $x_1R+I\subseteq x_1R+\fp$, then $\length_R(R/(x_1R+I))\geq
\length_R(R/(x_1R+\fp))$. But
\begin{eqnarray*}
\length_R(R/(x_1R+\fp))= \length_{R/\mathfrak{p}}((R/\fp)/(x_{1}\cdot
R/\fp))=\length_D(D/x_1D).
\end{eqnarray*}

Observe that $x_2^{c_{2}},x_{3}^{c_{3}}\in x_1D$ and so $x_1D$ is an
$\fm/\fp$-primary ideal of the one-dimensional Cohen-Macaulay local
domain $(D,\fm/\fp,k)$. By \cite[11.1.10]{sh},
$\length_D(D/x_1D)=e_D(x_1;D)$.

On the other hand, $V$ is a finite Cohen-Macaulay $D$-module of
$\rank_D(V)=1$, and so $e_D(x_1;D)=e_{D}(x_1;D)\cdot \rank
_D(V)=\length_D(V/x_1V)$ (\cite[4.6.11]{bh}).

Set $r=[k_V:k_D]$, the degree of the extension of the residue fields
of $V$ and of $D$. Then $\length_D(V/x_1V)=r\cdot
\length_V(V/x_1V)$. Finally, since $V$ is a DVR,
$\length_V(V/x_1V)=\nu(x_1)=\nu_1=lm_1$.

Therefore, putting together all the (in)equalities, we have:
\begin{eqnarray}\label{inequality}
m_1\cdot e(R)=\length_R(R/(x_1R+I))\geq
\length_R(R/(x_1R+\fp))=rlm_1.
\end{eqnarray}

Observe that in all this reasoning, we have only used that $R$ is a
complete Cohen-Macaulay local ring of dimension $3$ and that
$x_1,x_2,x_3$ generates a reduction of $\fm$.

Now, using that $R$ is regular, we have $e(R)=1$ and we deduce from
(\ref{inequality}) that 
\begin{eqnarray*}
\length_R(R/(x_1R+I))= \length_R(R/(x_1R+\fp)).
\end{eqnarray*}
On tensoring the exact sequence $0\to \fp/I\to R/I\to R/\fp\to 0$ by
$R/x_1R$, one obtains the exact sequence
\begin{eqnarray*}
0\to L/x_1L\to R/(x_1R+I)\to R/(x_1R+\fp)\to 0,
\end{eqnarray*}
where $L=\fp/I$, because $x_1$ is not in $\fp$ so $x_1R\cap
\fp=x_1\fp$. Then, from the equality $\length_R(R/(x_1R+I))=
\length_R(R/(x_1R+\fp))$, one deduces that $\length_R(L/x_1L)=0$ and
hence $L=x_1L$, which by the Lemma of Nakayama implies that $L=0$ and
$I=\fp$.
\end{proof}

Now we turn to the proof of Proposition~\ref{shd-mc}.

\begin{proof}[Proof of Proposition~\ref{shd-mc}] 
First observe that since $k$ is infinite, there exist $x_1,x_2,x_3$ in
$\fm$ such that $(x_1,x_2,x_3)$ is a minimal reduction of $\fm$. In
particular, $x_1,x_2,x_3$ is a system of parameters in $R$. Since
$(R,\fm,k)$ is a complete Noetherian local domain and $R\supset k$,
there exists a $k$-algebra homomorphism $\varphi:k[[X_1,X_2,X_3]]\to
R$, from the power series ring in the three indeterminates
$X_1,X_2,X_3$ over $k$ to $R$, with $\varphi(X_i)=x_i$, and such that
if $S=\mbox{im}(\varphi)=k[[x_1,x_2,x_3]]$, then $S\cong
k[[X_1,X_2,X_3]]$ via $\varphi$, $S$ is a complete regular local ring,
and $R$ is a finite extension of $S$ (see, e.g., \cite[29.4]{m2} and
its proof). Let $\psi:k[[X_1,X_2,X_3]]\to k[[t]]$ be defined as
$\psi(X_1)=t^{3}$, $\psi(X_2)=t^{4}$ and $\psi(X_3)=t^{5}$, where $t$
is an indeterminate over $k$. Then
$\ker(\psi)=(X_{1}^{3}-X_{2}^{}X_{3}^{},
X_{2}^{2}-X_{1}^{}X_{3}^{},X_{3}^{2}-X_{1}^{2}X_{2}^{})$ (see e.g.,
\cite[7.8]{op}, where a proof for the polynomial case is given). Set
$J=\varphi(\ker(\psi))$, which is a prime ideal of $S$ and set
$I=JR=(x_{1}^{3}-x_{2}^{}x_{3}^{}, x_{2}^{2}-x_{1}^{}x_{3}^{},
x_{3}^{2}-x_{1}^{2}x_{2}^{})$, which is the HN ideal of $R$ associated
to $x_1,x_2,x_3$ and $a=(1,1,1)$ and $b=(2,1,1)$.

Take now any minimal prime $\fp$ over $I$ and suppose that $\fp$ is
Gorenstein. We will reach a contradiction.

Since $JR=I\subseteq \fp$ and $S\subset R$ is an integral extension,
then $J\subseteq I\cap S\subseteq \fp\cap S$ and $S/(\fp\cap S)\subset
R/\fp$ is an integral extension. Hence $\fp\cap S$ is a prime ideal of
$S$ of height 2 and $J=\fp\cap S$. Set $A=S/J\cong
k[[t^{3},t^{4},t^{5}]]\subset k[[t]]$ and $D=R/\fp$. By the
Auslander-Buchsbaum formula, $R$ is a free $S$-module of
$\rank_S(R)=e(S)\cdot \rank_S(R)=e(R)=:e$ (see, e.g.,
\cite[4.6.9]{bh}). By base change, it follows that $R/I$ is a free
$A$-module of rank $e$. Hence $D$ is a torsion-free $A$-module of rank
$e^{\prime}$, where $1\leq e^{\prime}\leq e$.

We claim that $e^{\prime}>1$. Indeed, suppose that
$e^{\prime}=1$. Then the quotient field of $D$ can be identified with
the quotient field of $A$. By \cite[Ex.~21.11]{e}, the quotient field
of $A$ is $k((t))$ and the integral closure $V$ of $A$ is
$k[[t]]$. Since $A$ and $D$ have the same quotient field with
$A\subset D$ a finite and hence integral extension, $V$ is also the
integral closure of $D$. By \cite[op. cit.]{e}, the conductor
$A:_AV$ of $V$ into $A$ equals $(t^3,t^4,t^5)A=\fm_A$, the maximal
ideal of $A$. Since $A:_AV$ is also an ideal in $V$ and $t^3\in
A:_AV$, we therefore have
\begin{eqnarray*}
t^3V\subseteq A:_AV=\fm_A=(t^3,t^4,t^5)A\subseteq (t^3,t^4,t^5)V=t^3V.
\end{eqnarray*}
Therefore $t^3V=A:_AV=\fm_A$. Now $x_1D$ is a reduction of the maximal
ideal $\fm_D$ of $D$, since $(x_1,x_2,x_3)R$ is a reduction of
$\fm$. Hence the integral closure $\overline{x_1D}$ of the ideal
$x_1D$ equals $\fm_D$, so $x_1V=\overline{(x_1D)}V=\fm_DV$ (see, e.g.,
\cite[6.8.1]{sh}). Since $x_1V=t^3V$ and $A\subset D$ is integral, it
follows that
\begin{eqnarray*}
\fm_D\subseteq \fm_DV=t^3V=\fm_A\subseteq \fm_D.
\end{eqnarray*}
Hence $\fm_D=\fm_DV=t^3V$. In particular, $D\neq V$ and $\fm_D$ is
also an ideal of $V$. Hence $\fm_D\subseteq D:_DV\subsetneq D$ so
$\fm_D=D:_DV=t^3V$.  Using the fact that $D$ is Gorenstein and
\cite[12.2.2]{sh}, one has
\begin{eqnarray*}
&&2=2\cdot \length_D(D/\fm_D)=2\cdot \length_D(D/(D:_DV))=\\&&
  \length_D(V/(D:_DV))=\length_D(V/t^3V)=r\cdot \length_V(V/t^3V)=3r,
\end{eqnarray*}
where $r=[k_V:k]$, which is a contradiction. Thus $e^{\prime}\geq 2$.

Now, $R/I$ and $A$ are local rings with the same residue field. Hence,
since $x_1A$ is an $\fm_A$-primary ideal of the Noetherian local
domain $A$ and $\rank_A(D)=e^{\prime}$, by \cite[11.2.6]{sh}, and
regarding $D$ as an $R/I$-module, we have the following:
\begin{eqnarray*}
e_{R/I}(x_1\cdot R/I;D)=e_A(x_1A;D)=e_A(x_1A;A)\cdot
\rank_A(D)=3e^{\prime}\geq 6.
\end{eqnarray*}
Analogously, since $\rank_A(R/I)=e\leq 3$ by hypothesis, then
\begin{eqnarray*}
e_{R/I}(x_1\cdot R/I;R/I)=e_A(x_1A;R/I)=e_A(x_1A;A)\cdot
\rank_A(R/I)=3e\leq 9.
\end{eqnarray*}
But, by the associativity formula (see, e.g., \cite[11.2.4]{sh}),
letting $\fp$ vary through the minimal primes in $R$ over $I$, we
have
\begin{eqnarray*}
&&9\geq e_{R/I}(x_1;R/I)=\sum_{\mathfrak{p}}e_{R/I}(x_1;R/\fp)\cdot
  \length_{R_{\mathfrak{p}}}(R_{\mathfrak{p}}/IR_{\mathfrak{p}})\geq
  6\cdot (\mbox{the number of such }\fp).
\end{eqnarray*}
Hence the number of such $\fp$ equals 1 and for this unique $\fp$,
$\length_{R_{\mathfrak{p}}}(R_{\mathfrak{p}}/IR_{\mathfrak{p}})=1$. Therefore
$I=\fp$, which is a contradiction, since $I$ is not Gorenstein and
$\fp$ is Gorenstein.
\end{proof}

\section{The Shimoda Conjecture in the setting of a standard graded 
algebra}\label{sgraded}

In connection with the Shimoda property for affine rings, we note the
following result.

\begin{Proposition} 
Let $S$ be an affine domain of Krull dimension $d$ at least $3$. Then
$S$ contains a prime ideal $P$ that requires more than $\height(P)$
generators.
\end{Proposition}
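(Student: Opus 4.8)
The plan is to reduce entirely to the local results already in hand, by localizing $S$ at a regular closed point of full dimension and then pulling a non-principal-class prime back to $S$ by contraction.

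First I would exploit that $S$, being an affine domain over a field, is excellent, so its regular locus $\Reg(\Spec S)$ is a nonempty open subset of the irreducible scheme $\Spec S$. Since $S$ is a Jacobson ring, its closed points are dense, so $\Reg(\Spec S)$ meets the set of maximal ideals; and because $S$ is a finitely generated domain over a field, every maximal ideal has height $d$. Hence, choosing $\fm$ a maximal ideal in the regular locus, $A:=S_{\fm}$ is a regular local ring of Krull dimension $d\geq 3$.

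Next I claim $A$ is not Shimoda. If it were, then choosing a regular system of parameters $x_1,\ldots ,x_d$ of $A$ and factoring out $x_4,\ldots ,x_d$ one at a time, Remark~\ref{red-d}$(a)$ would show that $A/(x_4,\ldots ,x_d)A$ is a regular local Shimoda ring of Krull dimension $3$ (this step being vacuous when $d=3$). But Proposition~\ref{shd-reg}, and already the explicit Corollary~\ref{natural-candidate}, exhibits in any regular local ring of dimension $3$ a height-two prime minimally generated by three elements, so no such ring is Shimoda. This contradiction produces a prime $\fq$ of $A$, $\fq\neq \fm A$, with $\mu(\fq)\neq \height(\fq)$; since $\height(\fq)\leq\mu(\fq)$ always holds by Krull's height theorem, in fact $\mu(\fq)>\height(\fq)$.

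Finally I would descend to $S$ by contraction. Set $P=\fq\cap S$, a prime ideal of $S$ with $P_{\fm}=\fq$, because $\fq$ is disjoint from $S\setminus\fm$. Since $S_{P}=(S_{\fm})_{\fq}$, localization preserves the height of such a prime, so $\height(P)=\height(\fq)$; and every finite generating set of $P$ becomes one of $P_{\fm}=\fq$ after localizing, whence $\mu(P)\geq \mu(P_{\fm})=\mu(\fq)$. Combining these, $\mu(P)\geq \mu(\fq)>\height(\fq)=\height(P)$, so $P$ requires more than $\height(P)$ generators, as required.

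The crux of the argument is precisely this descent from $A$ back to $S$: one must make sure the excessive generation detected in the localization is not an artefact of localizing, which is guaranteed by the two observations $\height(P)=\height(\fq)$ and $\mu(P)\geq\mu(P_{\fm})$. Neither is hard, so the proposition is essentially a packaging of the local Proposition~\ref{shd-reg}. I would emphasize, however, that the genuinely difficult analogue is producing a \emph{homogeneous} such prime in the graded category, where a random smooth point is unavailable; that is where a Bertini-type argument and the result of Simis--Ulrich--Vasconcelos enter, as announced for Section~\ref{sgraded}.
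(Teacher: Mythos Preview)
Your proof is correct and follows essentially the same route as the paper's: exploit excellence to find a maximal ideal $\fm$ in the regular locus (the paper phrases this via a single element $s$ with $S_s$ regular and the Hilbert property to avoid $s$), invoke the local regular case (Theorem~\ref{shd-m}$(a)$/Proposition~\ref{shd-reg}) to produce a prime $\fq\subsetneq\fm S_{\fm}$ with $\mu(\fq)>\height(\fq)$, and contract to $P=\fq\cap S$ using $\height(P)=\height(\fq)$ and $\mu(P)\geq\mu(\fq)$. The only cosmetic difference is that the paper tracks the explicit values $\height(\fp)=d-1$, $\mu(\fp)=d$, whereas you argue more abstractly via ``not Shimoda''; both rest on the same reduction (Remark~\ref{red-d}) and the same explicit HN prime.
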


\begin{proof}
Since $S$ is an excellent domain, the regular locus $\Reg(S)$ is a
non-empty Zariski-open subset of $\Spec(S)$. Hence there exists a
non-zero element $s\in S$ such that $S_{s}$ is a regular ring. Since
$S$ is a Hilbert ring with $(0)$ a prime ideal, there exists a maximal
ideal $M\in S$ such that $s\notin M$, so $S_{M}$ is a regular local
ring. Now $\height(M)=d\geq 3$ and so, by Theorem~\ref{shd-m}$(a)$,
and its proof, there exists $\fp\in\Spec(S_{M})$ having dimension 1
such that $\mu(\fp)=d$. Set $P=\fp\cap S$. Since $\fp=P_{M}$ it
follows that $\height(P)=d-1$, yet $P$ requires at least $d$
generators.
\end{proof}

\begin{Remark}{\rm 
Note however that if $S$ is also a standard graded algebra, the
preceding result does not provide any information as to whether the
resulting prime ideal $P$ is or is not homogeneous.  
}\end{Remark}

We now consider a different but analogous set-up to the local case
considered in Section~\ref{mult}, influenced by the well-known
similarities between the theories of local and standard graded
rings. So let $A=k[x_{1},\ldots ,x_{n}]$ be a standard graded algebra
over the field $k$, i.e., $A$ is graded by the non-negative integers
$\bn_{0}$ with $k$ sitting in degree $0$ and each of the $x_{i}$
having degree 1. For simplicity we suppose that $k$ is infinite (in
fact, eventually we will suppose that $k$ has characteristic 0 or is
algebraically closed). Let $\cm:=(x_{1},...,x_{n})A$ denote the
irrelevant ideal of $A$, and we take $n$ to be the minimal number of
homogeneous generators $\mu (\cm)$ of $\cm$ (see \cite[1.5.15(a)]{bh},
noting that $\mu(\cm)=\mu (\cm_{\mathcal{M}})$).  We fix this
notation. We also suppose that $A$ is a \textit{Shimoda ring in the
  graded sense}, or a \textit{gr-Shimoda ring} for short, meaning that
each relevant homogeneous prime ideal $\fp$ of $A$ is generated by
$\height(\fp)$ elements, i.e., is of the principal class: note that
these generators can be chosen to be homogeneous (once again, see
\cite[1.5.15(a)]{bh}).

Finally, we suppose throughout that $A$ has Krull dimension at least
2.

\begin{Proposition}
Suppose that the gr-Shimoda ring $A$ has Krull dimension $d$ (with
$d\geq 2$). Then $A$ is a Gorenstein domain that satisfies the Serre
condition $(R_{d-1})$.
\end{Proposition}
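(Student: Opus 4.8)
The plan is to extract everything from the single observation that the gr-Shimoda hypothesis forces the homogeneous localizations to be regular, and then to run the graded analogue of Remark~\ref{ufd}. First I would check that $A$ is a domain: in an $\bn_0$-graded ring every minimal prime is homogeneous, and any minimal prime $\fp$ satisfies $\height(\fp)=0<d=\height(\cm)$, so $\fp\neq\cm$ is relevant; the gr-Shimoda condition then gives $\mu(\fp)=\height(\fp)=0$, i.e.\ $\fp=(0)$. Thus $(0)$ is the unique minimal prime and is itself prime, so $A$ is a domain. Next, for any relevant homogeneous prime $\fp$ the images of its $\height(\fp)$ homogeneous generators generate $\fp A_\fp$, so $\mu(\fp A_\fp)\leq\height(\fp)=\dim A_\fp$; since Krull's height theorem gives the reverse inequality, $A_\fp$ is a regular local ring.

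From this I would deduce the Serre condition $(R_{d-1})$. Because $A$ is a finitely generated $k$-algebra it is excellent, so $\Reg(A)$ is open and the singular locus equals $V(\mathfrak{c})$ for some ideal $\mathfrak{c}$; since regularity is stable under the scaling action of the grading, $V(\mathfrak{c})$ is a cone and $\sqrt{\mathfrak{c}}$ may be taken homogeneous. By the previous step no relevant homogeneous prime lies in $V(\mathfrak{c})$, so the only homogeneous prime over $\mathfrak{c}$ is $\cm$; as the radical of a homogeneous ideal is the intersection of the homogeneous primes above it, $\sqrt{\mathfrak{c}}=\cm$ (unless $\mathfrak{c}=A$). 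Hence $A_\fq$ is regular for every prime $\fq\neq\cm$, homogeneous or not; in particular, as $\height(\cm)=d$, the ring $A$ satisfies $(R_{d-1})$.

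To obtain Cohen--Macaulayness I would pass to the local domain $A_\cm$ and reproduce the argument of Remark~\ref{ufd}. Since $A$ is a catenary domain there is a saturated chain of homogeneous primes, hence a homogeneous prime $\fp$ with $\height(\fp)=d-1$; it is relevant and of the principal class, say $\fp=(f_1,\ldots,f_{d-1})$ with the $f_i$ homogeneous, and one checks $\height(f_1,\ldots,f_i)=i$ for every $i$. I claim $f_1,\ldots,f_{d-1}$ is a regular sequence in $A_\cm$. Any associated prime of an intermediate ideal $(f_1,\ldots,f_i)$ other than its height-$i$ minimal primes is embedded; were such a prime $\fq$ distinct from $\cm$, then $A_\fq$ would be regular, hence Cohen--Macaulay, forcing $(f_1,\ldots,f_i)A_\fq$ to be unmixed, a contradiction. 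This is exactly the configuration in which Davis's result (\cite{da}, as used in Remark~\ref{ufd}) yields that $f_1,\ldots,f_{d-1}$ is a regular sequence. Since $\fp$ is prime, $A/\fp$ is a one-dimensional domain with positive depth at $\cm$, so the sequence extends to length $d$ and $\depth(A_\cm)=d$; being graded, $A$ is then Cohen--Macaulay. I expect the verification that $\cm$ is not an embedded prime of the intermediate ideals --- equivalently, that $\depth(A_\cm)\geq d-1$ --- to be the main obstacle, since regularity on the punctured spectrum by itself places no lower bound on the depth at $\cm$; it is precisely here that the full strength of Davis's theorem is needed.

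Finally I would upgrade Cohen--Macaulay to Gorenstein. From $(R_{d-1})$ with $d\geq 2$ we get $(R_1)$, and Cohen--Macaulayness gives $(S_2)$, so by Serre's criterion (see, e.g., \cite{bh}) $A$ is normal, hence a Krull domain. By the gr-Shimoda hypothesis every height-one homogeneous prime is of the principal class, i.e.\ principal; since the divisor class group of a graded Krull domain is generated by the classes of its homogeneous height-one primes, $A$ is factorial. Then $A_\cm$ is a Cohen--Macaulay local unique factorization domain, so $A_\cm$ is Gorenstein by \cite[3.3.19]{bh}; together with the regularity of $A_\fq$ for $\fq\neq\cm$, this makes every localization of $A$ Gorenstein, and therefore $A$ is a Gorenstein domain. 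This completes the outline, the only genuinely delicate point being the depth estimate at $\cm$ in the Cohen--Macaulay step.
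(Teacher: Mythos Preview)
Your argument is correct and rests on Davis's result at exactly the same point the paper does, but it diverges from the paper's proof in two places worth noting.

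For the Serre condition $(R_{d-1})$: after the direct observation that $A_{\fp}$ is regular for every relevant homogeneous $\fp$ (the paper makes the same observation, implicitly, with its ``clearly''), you handle non-homogeneous primes by invoking excellence and the $k^{*}$-scaling action to force the singular locus to be cut out by a homogeneous radical ideal, hence contained in $\{\cm\}$. The paper instead cites the standard graded-to-local comparison (the first paragraph of the proof of \cite[1.5.8]{bh}, or \cite[(5.1)(b)]{f}) to see directly that $A_{\fq}$ is regular for any non-homogeneous $\fq$ once $A_{\fq^{*}}$ is. Your route is pleasantly geometric and self-contained but uses the infinite-field hypothesis; the paper's is a one-line citation.

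For Gorensteinness: you pass through factoriality---Serre's criterion gives normality, and since the divisor class group of a graded Krull domain is generated by the classes of homogeneous height-one primes (all principal here), $A$ is a UFD, whence \cite[3.3.19]{bh} applies. This nicely parallels the paper's own local argument in Section~\ref{mult}. The paper's graded proof takes a different line: it realizes the graded canonical module as a homogeneous height-one unmixed ideal $C$, observes that its (homogeneous) associated primes are principal by the gr-Shimoda hypothesis, and shows $C$ itself is principal by matching it locally with an explicit product of prime powers. Your approach proves the stronger fact that $A$ is factorial, at the cost of importing the graded class-group result; the paper's avoids that input but uses the canonical-module machinery instead.

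The Cohen--Macaulay step is handled identically in substance: both you and the paper take a relevant homogeneous prime of height $d-1$, invoke \cite[Remark, p.~203]{da} to get a generating regular sequence, and extend by one element. Your explicit worry about ruling out $\cm$ as an embedded prime of the intermediate ideals is well-founded, but the paper makes precisely the same citation without further comment, so your proof is no less complete on this point.
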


\begin{proof} On taking $\fp$ to be a minimal prime (necessarily 
homogeneous) in the above condition, we see that $\fp=(0)$ so that
$A$ is a domain. Next let $\fp$ be a homogeneous prime that is maximal
with respect to the property of being relevant. By Davis' result
\cite[Remark p.~203]{da}, $\fp$ is generated by a regular sequence of
length $\height(\fp)$. It follows that $\cm$ contains a regular
sequence of length $\height(\cm)$. Hence $A$ is Cohen-Macaulay (see
\cite[Ex.~2.1.27(c)]{bh}).

Next, since $A$ is a homomorphic image of a polynomial ring (with the
standard grading) by a homogeneous morphism of degree $0$, $A$ has a
graded canonical module $C$ (see \cite[3.6.10 and 3.6.12(b)]{bh}). By
\cite[Ex.~21.18(b)]{e}, we may take $C$ to be a homogeneous ideal of
$A$. If $C=A$, then $A$ is Gorenstein (\cite[3.6.11]{bh}). Suppose on
the other hand that $C$ is a proper ideal in $A$. By \cite[3.6.9 and
  3.3.18]{bh}, $C$ is an unmixed ideal of height 1 whose (necessarily
homogeneous) associated primes are therefore principal, by the
gr-Shimoda property. Let $\fp$ be any one of these associated primes,
with $\fp=(p)$ say. Then each $A_{\mathfrak{p}}$ is a DVR, so that
each localitzation $C_{\mathfrak{p}}$ is a principal ideal,
$C_{\mathfrak{p}}=p^{t_{p}}A_{\mathfrak{p}}$, say. It is easily seen
that $C=(\prod_{p}p^{t_{p}})A$, since the latter ideal has the same
associated primes as $C$ and agrees with $C$ locally at each of these
primes. Hence $C$ is principal. That $A$ is Gorenstein now follows
from \cite[3.6.11]{bh}.

Finally, to show that $A$ satisfies $(R_{d-1})$, we examine
$A_{\mathfrak{p}}$ for each prime ideal $\fp$ of height $h$, with
$h\leq d-1$. Clearly we need only consider the case where $\fp$ is
non-homogeneous, so that $h>0$. By the first paragraph of the proof of
\cite[1.5.8]{bh}, or by \cite[(5.1) Lemma, (b)]{f}, we see that for
such $\fp$, $A_{\mathfrak{p}}$ is a regular local ring, and the result
follows.
\end{proof}

We next consider the effect of Noether normalization applied to $A$,
noting that $k$ is infinite. This result highlights another way that
the present situation is to an extent analogous to that of
Section~\ref{mult}.

We distinguish between the Krull dimension of $A$, denoted $\kdim(A)$,
and the dimension of $A$ as a projective scheme, denoted
$\prdim(A)$. Thus $\kdim(A)=\prdim (A)+1$ (see \cite[pp.~286-287]{e}).

\begin{Proposition}\label{noether}
Let $d=\kdim(A)$. Then there exists a regular sequence of homogeneous
elements of degree $1$ in $A$, which we relabel as $x_{1},\ldots
,x_{d}$, such that $A=k[x_{1},\ldots,x_{n}]$ with $A$ a free finitely
generated graded module over the subring $B:=k[x_{1},...,x_{d}]$, $B$
being isomorphic to the polynomial ring over $k$ in $d$ variables
(with the standard grading) under the natural homogeneous mapping of
degree $0$.
\end{Proposition}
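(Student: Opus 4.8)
The plan is to obtain $x_{1},\dots,x_{d}$ as a graded Noether normalization of $A$ and then to upgrade the resulting module-finiteness to freeness by exploiting that $A$ is Cohen-Macaulay. First, since $k$ is infinite and $A$ is standard graded with $\dim_{k}A_{1}=\mu(\cm)=n$, a generic $k$-linear change of coordinates on $A_{1}$ produces a new $k$-basis of $A_{1}$, which I relabel $x_{1},\dots,x_{n}$, such that the first $d$ forms $x_{1},\dots,x_{d}$ give a graded Noether normalization: $A$ is a module-finite (hence integral) extension of $B:=k[x_{1},\dots,x_{d}]$ and $x_{1},\dots,x_{d}$ is a homogeneous system of parameters for $A$ (the classical argument, valid precisely because $k$ is infinite; see, e.g., \cite[1.5.17]{bh}). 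These $x_{i}$ still generate $A$, so $A=k[x_{1},\dots,x_{n}]$ as required. Because $A$ is integral over $B$ with $\dim B=d=\dim A$, the forms $x_{1},\dots,x_{d}$ are algebraically independent over $k$; hence the natural degree-$0$ homogeneous surjection of the standard graded polynomial ring in $d$ variables onto $B$ is an isomorphism, which gives the asserted structure of $B$.

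Next I would read off the regular-sequence claim at once. By the preceding Proposition, $A$ is Cohen-Macaulay, and a homogeneous system of parameters of a graded Cohen-Macaulay ring is a regular sequence (\cite[2.1.2]{bh}); therefore $x_{1},\dots,x_{d}$ is an $A$-regular sequence of length $d$, which is the homogeneous regular sequence promised in the statement.

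It remains to prove that $A$ is a \emph{free} finitely generated graded $B$-module, which I expect to be the heart of the matter. The strategy is to show that $A$, regarded as a finitely generated graded $B$-module, is maximal Cohen-Macaulay, and then to invoke Auslander-Buchsbaum over the regular ring $B$. Concretely, the graded maximal ideal of $B$ is $\fn=(x_{1},\dots,x_{d})B$, and since $x_{1},\dots,x_{d}$ is an $A$-regular sequence lying in $\fn$ we have $\depth_{\fn}(A)\geq d$, while $\dim_{B}A=\dim A=d$; hence $\depth_{B}A=d=\dim B$. As $B$ is regular, every finitely generated $B$-module has finite projective dimension, so the graded Auslander-Buchsbaum formula yields $\mathrm{pd}_{B}A=\depth B-\depth_{B}A=0$. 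Thus $A$ is a graded projective $B$-module; since $B$ has a unique graded maximal ideal, graded projectives are graded free by graded Nakayama, and finite generation is automatic from the module-finiteness above. This supplies the free finitely generated graded $B$-module structure and completes the argument.

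The main obstacle is precisely this last step: one must run both the depth computation and the Auslander-Buchsbaum formula in the graded category---depth measured against $\fn$, and projective-implies-free over the graded-local base $B$---rather than appealing directly to the local case. Everything else is the classical graded Noether normalization together with the Cohen-Macaulayness of $A$ already established.
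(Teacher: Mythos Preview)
Your proof is correct and follows essentially the same approach as the paper. The paper's own proof is a one-line citation to \cite[1.5.17(c)]{bh} together with Stanley's discussion \cite[\S3, p.~63]{st}, and what you have written is precisely the standard argument those references encode: graded Noether normalization in degree $1$ over an infinite field, the Cohen--Macaulay property of $A$ (supplied by the preceding Proposition) turning the homogeneous system of parameters into a regular sequence, and then freeness of $A$ over the regular subring $B$ via the graded Auslander--Buchsbaum formula.
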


\begin{proof} See \cite[1.5.17(c)]{bh}, together with the
discussion in \cite[\S~3, particularly that on p.~63]{st}. Note also
that $n$ remains invariant under this re-writing of $A$, as
$n=\mu(\cm)=\mu(\cm_{\mathcal{M}})$.
\end{proof}

We now show how to reduce dimension for gr-Shimoda rings.

\begin{Proposition}\label{red-dim}
Let $a$ be a homogeneous element of the gr-Shimoda ring $A$, $a$ lying
in $\cm\setminus \cm^2$. Then $A/aA$ is again a gr-Shimoda ring.
\end{Proposition}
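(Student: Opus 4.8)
The plan is to mimic, in the graded category, the argument of Remark~\ref{red-d}. First I would observe that since $A$ is standard graded one has $\cm^{2}=\bigoplus_{i\geq 2}A_{i}$, so a homogeneous element of $\cm\setminus\cm^{2}$ is necessarily a nonzero linear form; consequently $\overline{A}:=A/aA$ is again standard graded, generated in degree $1$ by the images of $x_{1},\ldots,x_{n}$, with irrelevant ideal $\overline{\cm}=\cm/aA$. Thus it makes sense to ask whether $\overline{A}$ is gr-Shimoda, and the homogeneous primes of $\overline{A}$ are precisely the $\fp/aA$ for $\fp$ a homogeneous prime of $A$ containing $a$. Under this order-preserving correspondence the relevant ones (those $\neq\overline{\cm}$) match the relevant homogeneous primes $\fp\neq\cm$ of $A$.

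So let $\overline{\fp}=\fp/aA$ be a relevant homogeneous prime of $\overline{A}$, with $\fp\neq\cm$ the corresponding relevant homogeneous prime of $A$. Since $A$ is gr-Shimoda, $\fp$ is of the principal class, $\mu(\fp)=\height(\fp)$, and I must show that both quantities drop by exactly one upon passing to $\overline{\fp}$. For the heights I would use that, by the Proposition preceding this one, $A$ is a Cohen-Macaulay domain; hence $a$ is a nonzerodivisor, so in the Cohen-Macaulay local ring $A_{\fp}$ it forms part of a system of parameters and the dimension drops by exactly one, giving $\height(\overline{\fp})=\dim A_{\fp}/aA_{\fp}=\dim A_{\fp}-1=\height(\fp)-1$.

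For the generators I would compute with graded Nakayama, $\mu(\overline{\fp})=\dim_{k}\overline{\fp}/\overline{\cm}\,\overline{\fp}=\dim_{k}\fp/(\cm\fp+aA)$, and compare with $\mu(\fp)=\dim_{k}\fp/\cm\fp$ via the natural surjection $\fp/\cm\fp\twoheadrightarrow\fp/(\cm\fp+aA)$, whose kernel is $(\cm\fp+aA)/\cm\fp\cong aA/(aA\cap\cm\fp)$. The heart of the matter, and the step I expect to be the main obstacle (being the exact graded analogue of the observation $a\in\fp\setminus\fm\fp$ in Remark~\ref{red-d}), is to verify that this kernel is one-dimensional over $k$. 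This rests on two points: first, $a\notin\cm\fp$, because $\fp\subseteq\cm$ forces $\cm\fp\subseteq\cm^{2}$ while $a\notin\cm^{2}$; and second, $a\cm\subseteq\cm\fp$, since $a\in\fp$. The second point shows that, writing any $f\in A$ as $f_{0}+f_{+}$ with $f_{0}\in k$ and $f_{+}\in\cm$, the class of $af$ in $\fp/\cm\fp$ equals $f_{0}\cdot\overline{a}$; hence the image of $aA$ is spanned by $\overline{a}$ alone, and the first point shows this class is nonzero, so the kernel is exactly $k\cdot\overline{a}$. Combining, $\mu(\overline{\fp})=\mu(\fp)-1=\height(\fp)-1=\height(\overline{\fp})$, so $\overline{\fp}$ is of the principal class, and therefore $\overline{A}=A/aA$ is gr-Shimoda.
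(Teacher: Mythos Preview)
Your argument is correct and follows essentially the same route as the paper: the key observation in both is that $a\in\fp\setminus\cm\fp$ (since $\cm\fp\subseteq\cm^{2}$), whence $\mu(\fp/aA)=\mu(\fp)-1=\height(\fp)-1=\height(\fp/aA)$. You supply more detail than the paper does---in particular you justify the height drop explicitly via the Cohen--Macaulay property established in the preceding proposition, and you spell out the kernel computation for the generator count---whereas the paper simply asserts both equalities in one line.
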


\begin{proof}
First of all we note that such an element $a$ exists. By the graded
version of Nakayama's Lemma, $\cm\neq \cm^{2}$ since $A$ is not a
field. Since $\cm$ is a homogeneous ideal, it has a set of homogeneous
generators not all of which can lie in $\cm^{2}$, by the previous
observation. Pick $a$ to be a suitable member of this set of
generators.

Clearly $A/aA$ is a standard graded ring (with $\kdim(A/aA)$ at least
1) having irrelevant ideal $\cm/aA$. Consider a relevant homogeneous
prime ideal $\fp$ of $A/aA.$ Then $\fp=\fP/aA$ for a unique relevant
homogeneous prime ideal $\fP$ of $A$ that contains $a.$ Since $a\in
\cm\setminus\cm^{2}$, a fortiori $a\in\fP\setminus\cm\fP$. Hence $a$
is a minimal homogeneous generator of $\fP$, so that
\begin{equation*}
\mu (\fp)=\mu (\fP)-1=\height(\fP)-1=\height(\fp),
\end{equation*}
and the result follows.
\end{proof}

We can now give our main result.

\begin{Theorem}\label{shm-graded}
Suppose that in the gr-Shimoda ring $A$, the base field $k$ has
characteristic $0$. Then $\prdim(A)\leq 2$.
\end{Theorem}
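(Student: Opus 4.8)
The plan is to show that the gr-Shimoda hypothesis forces $\Proj(A)$ to be a linear space, i.e.\ forces $A$ to be a polynomial ring, and then to rule out $\prdim(A)\geq 3$ by exhibiting an explicit homogeneous prime of height $2$ requiring three generators. Throughout I use that, by the preceding proposition, $A$ is a Cohen--Macaulay (indeed Gorenstein) domain satisfying $(R_{d-1})$ with $d=\kdim(A)$; since $(R_{d-1})$ gives that $A_{\fp}$ is regular for every relevant homogeneous prime $\fp$, the scheme $X:=\Proj(A)$ is a smooth irreducible projective variety of dimension $\prdim(A)$.

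The reduction engine is Proposition~\ref{red-dim} combined with a Bertini theorem. Starting from a gr-Shimoda $A$ with $\prdim(A)=e\geq 1$, I repeatedly factor out a general linear form $a\in\cm\setminus\cm^{2}$. By Proposition~\ref{red-dim} each quotient $A/aA$ is again gr-Shimoda, and geometrically this replaces $X$ by a general hyperplane section. This is where Bertini is used twice: over a field of characteristic $0$, a general hyperplane section of a smooth irreducible projective variety of dimension $\geq 2$ is again smooth (first use) and again irreducible (second use). Since the dimension stays $\geq 2$ until the very last cut, I may descend, keeping $\Proj$ smooth and irreducible at every stage, to a gr-Shimoda $\bar A$ with $\prdim(\bar A)=1$, whose $\Proj$ is a smooth irreducible projective curve $C$. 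Crucially, a general linear form is superficial, so, exactly as in Remark~\ref{red-d}$(b)$, multiplicity is preserved and $e(\bar A)=e(A)$; the finite free structure of Proposition~\ref{noether} underlies this multiplicity bookkeeping.

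The heart of the argument is to force $e(A)=1$. On the curve, each closed point of $C$ is a relevant homogeneous prime $\fp$ with $\height(\fp)=1$, so by the gr-Shimoda property $\fp=(f)$ is principal; comparing multiplicities (a homogeneous nonzerodivisor of degree $m$ multiplies the degree of the curve by $m$, whereas a reduced rational point contributes multiplicity $1$) forces $\deg(C)=e(\bar A)=1$. This is precisely where I invoke the result of Simis, Ulrich and Vasconcelos \cite{suv}, which converts the hyperplane-section data into the statement that a standard graded domain of multiplicity $\geq 2$ must acquire a homogeneous prime that is \emph{not} of the principal class --- incompatible with gr-Shimoda. Either way one concludes $e(A)=1$, and a Cohen--Macaulay standard graded domain of multiplicity $1$ has trivial $h$-vector and so is a polynomial ring; hence $A\cong k[x_{1},\ldots,x_{d}]$ with $d=\kdim(A)$.

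It remains to observe that $d\leq 3$. If $d\geq 4$, then inside $A\cong k[x_{1},\ldots,x_{d}]$ the homogeneous ideal $I=(x_{1}x_{3}-x_{2}^{2},\,x_{1}x_{4}-x_{2}x_{3},\,x_{2}x_{4}-x_{3}^{2})$ of $2\times 2$ minors of a $2\times 3$ scroll matrix (a homogeneous determinantal ideal in the spirit of Remark~\ref{onHN}, defining a twisted cubic) is a height-$2$ prime minimally generated by three elements, contradicting the gr-Shimoda property. Hence $d\leq 3$, that is $\prdim(A)\leq 2$. I expect the main obstacle to be the middle step: arranging the hypotheses so that Bertini applies in the stated generality (smoothness \emph{and} irreducibility of general sections in characteristic $0$, together with the harmless reduction to $k$ algebraically closed needed to guarantee that the points used have degree $1$), and then applying the Simis--Ulrich--Vasconcelos result correctly to turn the multiplicity information into the presence or absence of an excess-generator homogeneous prime.
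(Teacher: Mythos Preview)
Your argument has a genuine gap at the ``middle step,'' and it is not the one you anticipate. The claim that a gr-Shimoda ring $\bar A$ with $\kdim(\bar A)=2$ must have $e(\bar A)=1$ is \emph{false} over a non-algebraically-closed field. Take $\bar A=\mathbb{Q}[X,Y,Z]/(X^{2}+Y^{2}+Z^{2})$: the conic $C=\Proj(\bar A)$ has no $\mathbb{Q}$-rational point, so $\mathrm{Pic}(C)=\mathbb{Z}\cdot[H]$ and hence $\mathrm{Cl}(\bar A)=\mathrm{Pic}(C)/\mathbb{Z}[H]=0$. Thus $\bar A$ is a UFD, every height-$1$ (homogeneous or not) prime is principal, and $\bar A$ is gr-Shimoda; yet $e(\bar A)=2$. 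Your degree count ``$m\cdot e=1$'' needs a closed point of degree $1$, and none exists here. The proposed ``harmless reduction to $k$ algebraically closed'' is not harmless: the gr-Shimoda property does not survive base change. In the same example, $\bar A\otimes_{\mathbb{Q}}\overline{\mathbb{Q}}\cong\overline{\mathbb{Q}}[X,Y,Z]/(XZ-Y^{2})$ acquires the non-principal height-$1$ homogeneous prime $(X,Y)$. Consequently your conclusion that $A$ must be a polynomial ring cannot be reached along this route. Your invocation of \cite{suv} is also misplaced: the Simis--Ulrich--Vasconcelos input used in the paper is a colon-ideal construction in a polynomial extension, not a statement that multiplicity $\geq 2$ forces a non-principal-class homogeneous prime (the conic above refutes any such statement).

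The paper's approach is quite different and does not attempt to prove that $A$ is a polynomial ring. After reducing to $\kdim(A)=4$ via Proposition~\ref{red-dim}, it establishes the stronger assertion that \emph{any} standard graded Cohen--Macaulay domain of Krull dimension $4$ over a field of characteristic $0$ contains a height-$2$ homogeneous prime $\fp$ with $\mu(\fp)=3$. Starting from a degree-$1$ regular sequence $x,y,z$, it forms the determinantal ideal $I=(x^{2}-yz,\,y^{2}-xz,\,z^{2}-xy)$ and passes to $D=A[X,Y]$, where the colon ideal $K=(u+xX,\,v+xY):_{D}x$ (this is the step drawn from \cite{suv}) is shown to be a height-$2$ prime with a Hilbert--Burch resolution and $\mu(K)=3$. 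Two applications of Flenner's Bertini theorem then specialize $X$ and $Y$ to generic $k$-linear forms in $A$, carrying the Hilbert--Burch presentation down intact and producing the required height-$2$ homogeneous prime inside $A$ itself. Thus Bertini is used to \emph{specialize a generic construction back into $A$}, not to cut $A$ down, and the argument is entirely insensitive to whether $e(A)=1$.
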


\begin{proof}
We suppose that $\prdim (A)\geq 3$ and deduce a contradiction by
finding a homogeneous ideal of height $\prdim (A)-1$ that requires a
generating set of cardinality $\prdim (A)$. By
Proposition~\ref{red-dim} we may suppose that $\prdim (A)=3$.

We use without mention the fact that $A$ is a standard graded
Cohen-Macaulay affine domain, together with the well-known properties
of such domains, such as their being catenary. Note that these are the
only hypotheses (together with the fact that char$(k)=0)$ used in the
remainder of the argument. That is, we prove the following.

\medskip

\noindent \textit{``A standard graded Cohen-Macaulay affine domain
  over a field of characteristic zero and of Krull dimension $4$ has a
  relevant homogeneous prime ideal $\fp$ of height $2$ with
  $\mu(\fp)=3$.''}

\medskip

\noindent {\em Step 1. Constructing the prototype of the prime ideal
  we seek.} Since $\kdim(A)=4$, using Proposition~\ref{noether} we
can choose a regular sequence $x,y,z$ of homogeneous elements of
degree $1$ in $A$.  Set $u=x^{2}-yz$, $v=y^{2}-xz$ and $w=z^{2}-xy$,
and let $I=(u,v,w)A$; note that $u,v,w$ are homogeneous of degree
$2$. It is easily seen that $u,v,w$ are all non-zero elements: for
example, note that $\rad((u,y)A)=\rad((x,y)A)$; hence $(u,y)A$ has
height $2$, so $u\neq 0$.

By way of motivation, we note in passing that $I$ is an almost
complete intersection Northcott ideal (\cite{n}, \cite{v}) and it is
also a determinantal ideal. (The ideal analyzed in Section~\ref{mult}
also had these properties, and they will prove crucial in the analysis
of the ideal $K$ that we will focus on below.) We sketch the details
as follows.

Setting
\begin{equation*}
\Phi =\left(
\begin{array}{rr}
x&z\\-z&-y
\end{array}
\right) ,
\end{equation*}
and letting $^{\top}$ denote matrix transpose, we have that
\begin{equation}\label{transpose}
\Phi\cdot\left(\begin{array}{cc}x&-y\end{array}\right)^{\top}=
  \left(\begin{array}{cc}u&v\end{array}\right)^{\top}
\end{equation}
and that $w=\det(\Phi)$. Note that $u,v$ is a regular sequence,
since $u$ is a non-zero element in the domain $A$ and $v$ cannot lie
in any associated prime of $u$ (which is necessarily at height 1),
because $(u,v,z)A$ has height 3 as a consequence of the equality
$\rad((u,v,z)A)=\rad((x,y,z)A)$. It follows from \cite[\S2]{op},
since $(u,v)A$ has grade 2, that $I$ is grade-unmixed of height 2.
Note also that
\begin{equation*}
I=I_{2}\left(\begin{array}{ccc}
x&y&z\\y&z&x\end{array}
\right) ,
\end{equation*}
so, given the properties of $I$ stated above, the graded version of
the Hilbert-Burch Theorem provides us with the resolution
\begin{equation*}
0\rightarrow A^{2}(-2)\overset{\varphi _{2}}{\rightarrow }A^{3}(-1)
\overset{\varphi _{1}}{\rightarrow }A\rightarrow A/I\rightarrow 0,
\end{equation*}
where
\begin{equation*}
\varphi _{1}=\left(\begin{array}{ccc}w&u&v\end{array}\right) \mbox{
    and }\varphi _{2}^{\top}=\left(\begin{array}{ccc}
    x&y&z\\y&z&x\end{array}\right) ,
\end{equation*}
so this resolution is then minimal. Hence $\mu (I)=3$. We also see,
using the Auslander-Buchsbaum formula, that $I_{\mathcal{M}}$ and
hence $I$ is a Cohen-Macaulay ideal. (For further properties of such
ideals, see \cite[\S2 and 5]{op}.)

We now use and develop \cite[4.6]{suv} along similar lines, in the
present setting.

Consider the polynomial extension $D:=A[X,Y];$ note that $D$ is a
standard graded Cohen-Macaulay domain, on giving each of $X$ and $Y$
the weighting 1. Set
\begin{equation*}
K=(u+xX,v+xY)D:_{D}x.
\end{equation*}
In (\ref{transpose}), on premultiplying by $\adj(\Phi)$, one obtains
the relationship
\begin{equation*}
xw=-yu-zv.
\end{equation*}
One easily checks that, as a result, $w-yX-zY\in K$. We wish to show
that $K$ is a prime ideal of $D$, that $K=L:=(u+xX,v+xY,w-yX-zY)D$ and
that $\mu(K)=3$ (note that $K$ is then a homogeneous ideal of $D$,
being generated by quadrics). The ideal $K$ is the prototype of the
prime ideal in $A$ that we seek so as to establish our contradiction.

We first show that $u+xX$ is a prime element in $D$. Note that, as
remarked above, $u$ is a regular element in the domain $A$, since
$u\neq 0.$ Next, $x$ is regular modulo $uA$, otherwise $(u,x)A$ would
have height $1$ which would contradict the fact that
$\rad((u,x)A)\ (=\rad((x,y)A)\cap \rad((x,z)A))$ has height $2$.
That $u+xX$ is a prime element in $A[X]$ follows from \cite[Ex.~3,
  p.~102]{k}; hence $u+xX$ is also a prime element in $D$. Now
$v+xY\notin (u+xX)D$, since $v\notin uA$, and we deduce that
$u+xX,v+xY$ is a regular sequence in $D$. Note also that $K$ is a
proper ideal in $D$, since otherwise it would follow that $x\in
(u,v)A$, which would contradict the fact that $x$ is actually regular
modulo the larger ideal $I\equiv (u,v,w)A$. Hence $K$ is unmixed of
height $2$, by basic properties of the colon operation vis-\`a-vis
finite intersections and primary ideals. Since $w-yX-zY\in K$, it is
easily seen that, up to radical, $K+xD$ contains the elements
$x,y,z(z-Y)$. Since each of the ideals $(x,y,z)D$ and $(x,y,z-Y)D$ has
height $3$, it follows that $K+xD$ has height $3$ and hence that $x$
is regular modulo $K$, since $K$ is unmixed of height $2$. But it is
clear that $(D/K)_{x}$ is isomorphic to
$A_{x}[\frac{u}{x},\frac{v}{x}]$, i.e., to the domain $A_{x}$. Hence
$D/K$ is a domain, so $K$ is indeed a prime ideal.

Recall the ideal $L:=(u+xX,v+xY,w-yX-zY)D$ introduced above. Let
\begin{equation*}
\Psi=\left(\begin{array}{rr}x+X&-z\\-z+Y&y\end{array}\right) .
\end{equation*}
Note that
$\Psi\cdot\left( \begin{array}{cc}x&y\end{array}\right)^{\top}
  =\left(\begin{array}{cc} u+xX&v+xY\end{array}\right)^{\top}$ 
and that $\det(\Psi)=-(w-yX-zY)$. We have seen above that
$\grade((u+xX,v+xY)D)=2$ and that $K$ (and so $L)$ is a proper
ideal. Hence by \cite[Theorem~2]{n}, the projective dimension of $D/L$
is $2$ and $L$ is grade and hence height unmixed, with all its
associated primes having height 2. The argument above that established
that $K+xD$ had height $3$ actually showed that $L+xD$ had height
3. Hence $x$ is regular modulo $L$.

Now
\begin{equation*}
(u+xX,v+xY)D\subseteq L\subseteq K\equiv (u+xX,v+xY)D:_{D}x,
\end{equation*}
so that on localizing at each associated prime $\fp$ of $L$ we have
\begin{equation*}
(u+xX,v+xY)D_{\mathfrak{p}}\subseteq L_{\mathfrak{p}}\subseteq
  K_{\mathfrak{p}}\equiv (u+xX,v+xY)D_{\mathfrak{p}}.
\end{equation*}
Hence $L_{\mathfrak{p}}=K_{\mathfrak{p}}$ for all such $\fp$, so
$L=K$. Thus
\begin{equation*}
K=(u+xX,v+xY,w-yX-zY)D.
\end{equation*}
So
\begin{equation*}
K=I_{2}\left(\begin{array}{ccc}x&z&-y\\y&x+X&-z+Y\end{array}
\right) ,
\end{equation*}
and again by the Hilbert-Burch Theorem, on setting
\begin{equation*}
\psi_{1}=\left(\begin{array}{ccc}
w-yX-zY&v+xY&-(u+xX)
\end{array}
\right) \mbox{ , }\psi _{2}^{\top}=\left(
\begin{array}{ccc}
x & z & -y \\
y & x+X & -z+Y
\end{array}
\right) ,
\end{equation*}
we have the minimal free resolution
\begin{equation}\label{min-free-res}
0\rightarrow D^{2}(-2)\overset{\psi _{2}}{\rightarrow
}D^{3}(-1)\overset{\psi _{1}}{\rightarrow }D\rightarrow D/K\rightarrow
0
\end{equation}
of $D/K$. In particular, $\mu (K)=3$, and as before, an application of
the Auslander-Buchsbaum formula shows that $K$ is a Cohen-Macaulay
ideal.

So $K$ has the features that we are seeking - a homogeneous prime
ideal of height 2, minimally generated by $3$ elements - except that
the projective variety $\mathbb{V}_{+}(K)$ (consisting
set-theoretically of all relevant homogeneous prime ideals containing
$K$) lies not in $\Proj(A)$ but in $\Proj(A[X,Y])$. We now employ a
double use of a Bertini Theorem to project back into $\Proj(A)$.

\medskip

\noindent {\em Step 2. Retracting back into $A$ via the double use of
  a Bertini Theorem}. For a standard graded Cohen-Macaulay algebra $S$
of positive Krull dimension, note that the scheme $\Proj(S)$ is
integral if and only if $S$ is an integral domain. We use this fact
below without further comment.

In our setting then, the scheme $\Proj(D/K)$ is integral. Note that
pr-$\dim (D/K)=3$. Now apply \cite[(5.5) Satz]{f}, with the role of the
$f_{i}$ there being played by the images of $x_{1},...,x_{n},X,Y$ in
$D/K$, each of which has grade 1. We abuse notation by continuing to
write $x_{1},...,x_{n},X,Y$ for these respective images. Hence, in
applying \cite[(5.5) Satz]{f}, $D_{+}(f_{0},...,f_{n})$ there in fact
equals all of $\Proj(D/K)$. (Recall that $D_{+}(f_{0},...,f_{n})$
consists set-theoretically of all relevant homogeneous prime ideals
that do not contain $\{f_{0},...,f_{n}\}.)$

We now make the elementary observation that if $R$ is a ring and $r$
is an element of $R$, then, for an indeterminate $Z$ over $R$, we have
a natural induced isomorphism $R[Z]/(Z-r)\approx R$ arising from the
natural retraction on $R[Z]$ that maps $Z$ to $r$ (and is the identity
map on $R$).  We apply this observation to a generic hyperplane
section $H$, say, of $\Proj(D/K)$ and then to a further generic
hyperplane section $H^{\prime }$, say, of $\Proj(D/K)\cap H$, resulting
finally in an integral subscheme of $\Proj(A)$.

Hence, by \cite[(5.5) Satz]{f}, for all $\alpha:=(\alpha
_{1},...,\alpha _{n+2})$ in a non-empty Zariski-open set in $k^{n+2}$,
and following the standard notation in \cite[Section 5]{f},
$\mathbb{V}_{+}(f_{\alpha})$ is an integral subscheme of
$\Proj(D/K)$, where $f_{\alpha}:=\alpha _{1}x_{1}+...+\alpha
_{n}x_{n}+\alpha _{n+1}X+\alpha _{n+2}Y$. Without loss of generality,
we may suppose that $\alpha _{n+2}\neq 0$, using the intersection of
non-empty Zariski-open sets in $k^{n+2}$. Note that $K$ is generated
by quadrics, so that the variety $\mathbb{V}_{+}(K)$ is nondegenerate
(i.e., is not contained in a hyperplane). Set
\begin{equation*}
Y^{\prime }=-(\alpha _{1}x_{1}+...+\alpha _{n}x_{n}+\alpha
_{n+1}X)/\alpha _{n+2}\mbox{ , }D^{\prime }=A[X].
\end{equation*}
By our elementary observation therefore, $K^{\prime }:=(u+xX,v+xY^{\prime
},w-yX-zY^{\prime })D^{\prime }$ is a homogeneous prime ideal which is
generated by quadrics, and pr-$\dim (D^{\prime }/K^{\prime })=2$.

Fix such an $\alpha$ in $k^{n+2}$. Proceeding as before then, there
exists a non-empty Zariski-open set in $k^{n+1}$, depending on
$\alpha$, such that for all $\beta:=(\beta _{1},...,\beta _{n+1})$ in
this set, we have $\beta _{n+1}\neq 0$, and on setting
\begin{equation*}
X^{\prime }=-(\beta _{1}x_{1}+...+\beta _{n}x_{n})/\beta _{n+1}\mbox{
  , and }Y^{\prime \prime }=-(\alpha _{1}x_{1}+...+\alpha
_{n}x_{n}+\alpha _{n+1}X^{\prime })/\alpha _{n+2},
\end{equation*}
$K^{\prime \prime }:=(u+xX^{\prime },v+xY^{\prime \prime
},w-yX^{\prime }-zY^{\prime \prime })A$ is a prime ideal with
$\prdim(A/K^{\prime \prime })=1$, so $\grade(K^{\prime \prime
})=\height(K^{\prime \prime })=2$. Now the presentation
\begin{equation*}
K=I_{2}\left(
\begin{array}{ccc}
x & z & -y \\
y & x+X & -z+Y
\end{array}
\right)
\end{equation*}
carries over to give the equality
\begin{equation*}
K^{\prime \prime }=I_{2}\left(
\begin{array}{ccc}
x & z & -y \\
y & x+X^{\prime } & -z+Y^{\prime \prime }
\end{array}
\right) ,
\end{equation*}
and hence the resolution (\ref{min-free-res}) carries over to give
the minimal presentation
\begin{equation*}
0\rightarrow A^{2}(-2)\overset{\psi_{2}^{\prime }}{\rightarrow
}A^{3}(-1)\overset{\psi_{1}^{\prime }}{\rightarrow }A\rightarrow
A/K^{\prime \prime }\rightarrow 0,
\end{equation*}
where
\begin{equation*}
\psi_{1}^{\prime }=\left(
\begin{array}{ccc}
w-yX^{\prime }-zY^{\prime \prime }&v+xY^{\prime \prime }&-(u+xX^{\prime})
\end{array}\right) \mbox{ , }(\psi_{2}^{\prime })^{\top}=\left(
\begin{array}{ccc}
x & z & -y \\ y & x+X^{\prime } & -z+Y^{\prime \prime }
\end{array}
\right) .
\end{equation*}

In particular, $\mu (K^{\prime \prime })=3$, so $K^{\prime \prime }$
is the prime ideal we seek, and this yields the desired contradiction.
\end{proof}

\begin{Remark}{\rm 
Note that this result is optimal. Consider the ring $A$ where $A$ is
the polynomial ring $k[{X,Y,Z]}$, with $k$ algebraically closed, under
the natural grading. We see from the projective Nullstellensatz and
the fact that $A$ is a UFD that $A$ is a gr-Shimoda ring, and
pr-dim$(A)=2$.  }\end{Remark}

\section{More technical results in the standard graded 
case}\label{viamiller}

We can adapt arguments used in M. Miller's paper \cite{miller} to
prove the following results. We provide only some additional details,
as necessary.

\begin{Theorem}\label{approachmil} (cf. \cite[Corollary~2.7 
(and Theorem~2.1)]{miller}).  Let $S$ be a standard graded algebra
  over a field $k$ of characteristic $0$, such that $S$ is a domain,
  $\kdim(S)\geq 4$ and $S$ satisfies the Serre conditions $(R_{2})$
  and $(S_{3})$. Suppose further that homogeneous prime ideals of $S$
  of height $1$ are principal. Then $S$ possesses height $2$
  homogeneous prime ideals requiring an arbitrarily large number of
  generators.
\end{Theorem}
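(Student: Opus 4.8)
The plan is to mimic M. Miller's construction from \cite{miller}, replacing the local ingredients there by their standard graded counterparts, and to iterate the Bourbaki-ideal construction so as to manufacture homogeneous prime ideals of height $2$ whose minimal number of generators grows without bound. The starting point is a homogeneous module-theoretic input: under the hypotheses $(R_2)$ and $(S_3)$ together with $\kdim(S)\geq 4$, one produces a suitable torsion-free (in fact reflexive) graded $S$-module $E$ of rank $2$ that is not free, but whose localizations at primes of small height are well-behaved. Concretely, I would first record that the hypotheses guarantee that $S$ is a Gorenstein (or at least a normal, $(S_3)$) domain in which second syzygies are reflexive, so that a generic Bourbaki sequence
\begin{equation*}
0\rightarrow F\rightarrow E\rightarrow P\rightarrow 0
\end{equation*}
exists, with $F$ free and $P$ a height-$2$ ideal (after twisting appropriately to keep everything homogeneous). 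The role of $(R_2)$ is to ensure that the generic Bourbaki ideal $P$ so produced is prime of height $2$: this is exactly the graded analogue of Miller's Theorem~2.1, and the characteristic-zero hypothesis feeds a Bertini-type genericity argument, entirely parallel to the double use of \cite[(5.5) Satz]{f} in the proof of Theorem~\ref{shm-graded}.

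The second, and decisive, ingredient is to make $\mu(P)$ large. The point is that in the Bourbaki sequence the number of generators of the prime ideal $P$ is controlled by $\mu(E)$, and by choosing $E$ to be a high syzygy (or a large direct-sum--type construction) of a fixed non-free reflexive module, one forces $\mu(E)$, and hence $\mu(P)$, to be as large as desired. Thus I would fix an arbitrary integer $N$ and exhibit a reflexive graded module $E_N$ of rank $2$ with $\mu(E_N)\geq N$, then feed $E_N$ into the generic Bourbaki machine to obtain a homogeneous prime $P_N$ of height $2$ with $\mu(P_N)\geq N-1$ (or a comparable explicit bound). The homogeneity is preserved throughout because all the choices — the Bourbaki elements, the hyperplane sections — can be taken from the non-empty Zariski-open subsets of the relevant affine spaces $k^m$ of homogeneous degree-one forms, exactly as in Step~2 of Theorem~\ref{shm-graded}.

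The main obstacle, and the place where Miller's argument must be adapted most carefully, is verifying that the generic Bourbaki ideal $P_N$ is genuinely \emph{prime} and \emph{height exactly} $2$ (not merely unmixed of height $2$), while simultaneously keeping $\mu(P_N)$ from collapsing. Primeness requires the Serre condition $(R_2)$ to guarantee that the singular locus of $S$ has codimension at least $3$, so that after two generic hyperplane sections one lands in the regular locus where the Bertini irreducibility theorem applies; the condition $(S_3)$ is what ensures that the module $E_N$ remains reflexive and that the Bourbaki sequence does not degenerate when we pass to the relevant localizations. I would therefore organize the proof as: (i) construct $E_N$ and establish its reflexivity and the lower bound on $\mu(E_N)$; (ii) form the generic homogeneous Bourbaki sequence and check that $P_N$ is height-unmixed of height $2$ via the colon/regular-sequence bookkeeping already used for the ideal $K$ in Theorem~\ref{shm-graded}; (iii) invoke $(R_2)$ and a Bertini argument in characteristic $0$ to upgrade ``unmixed of height $2$'' to ``prime''; and (iv) read off the generator bound from the presentation of $E_N$. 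The bookkeeping in step (ii), and the transfer of Miller's local normality arguments into the graded $(R_2)+(S_3)$ setting in step (iii), are where the real work lies; the remaining steps are direct transcriptions of the techniques already deployed in Section~\ref{sgraded}.
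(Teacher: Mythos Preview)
Your outline is essentially the paper's approach: both adapt Miller's Bourbaki-ideal construction to the standard graded setting, using a Bertini argument (via Flenner) in characteristic zero to secure primeness and then reading off the generator count from the input module. The paper carries this out by passing to the polynomial extension $A=S[Y_{i,j}]$, forming the generic quotient $J=\coker(\alpha\oplus\phi)$ there (with $M$ a graded \emph{second syzygy} of a module of finite projective dimension at least $3$), proving $J$ prime of height $2$ in $A$, and only then specializing back to $S$ by $bd$ generic linear forms; your version works directly in $S$. These are equivalent formulations of the same strategy.

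Two points deserve tightening. First, your insistence that $E_N$ have \emph{rank $2$} is in tension with your own mechanism for enlarging $\mu$: a high syzygy of a fixed module has large $\mu$ \emph{and} large rank, and in the Bourbaki sequence $0\to F\to E\to P\to 0$ with $\rank F=\rank E-1$ it is $\mu(E)-\rank(E)+1$, not $\mu(E)$ alone, that controls $\mu(P)$. The paper (following Miller) sidesteps this by allowing $M$ of arbitrary rank $\geq 2$ and taking $d=\rank(M)-1$ generic sections; you should drop the rank-$2$ restriction and phrase the growth in terms of $\mu(M)-\rank(M)$. Second, you do not explicitly invoke the hypothesis that height-$1$ homogeneous primes are principal. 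In the paper this hypothesis is exactly what allows one, when the torsion-free rank-$1$ module $J$ has height $1$, to divide out a homogeneous principal ideal and obtain an isomorphic ideal of height $\geq 2$; without it your step (ii) (``$P_N$ is height-unmixed of height $2$'') is incomplete.
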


\begin{proof}[Sketch of Proof] Let $M$ be the graded second syzygy in a
minimal graded free resolution of a finitely generated graded
$S$-module of finite projective dimension at least 3, so that $M$
itself is not free. Note that by the Evans-Griffith Syzygy Theorem
(cf. \cite[Corollary~9.5.6 and subsequent remark]{bh}) $\rank(M)\geq
2$.  Let $d=\rank(M)-1$.

Suppose that we have the graded presentation
$S^{c}\buildrel\alpha\over\rightarrow S^{b}\rightarrow M\rightarrow 0$
(ignoring twists). Set $A=S[Y_{i,j}\mid 1\leq i\leq b,1\leq j\leq d]$,
giving each indeterminate $Y_{i,j}$ over $S$ weight 1. Define
$\phi:A^{d}\rightarrow A^{b}$ by the matrix
$\left(Y_{i,j}\right)$. Let $J=\coker(\alpha\oplus\phi)$. Hence we
have the presentation
\begin{eqnarray*}
A^{c}\oplus A^{d}\overset{\alpha \oplus \phi }{\rightarrow
}A^{b}\rightarrow J\rightarrow 0.
\end{eqnarray*}
Clearly $\rank(J)=1$. Note that $N:=M\otimes A\simeq A^{b}/\alpha
(A^{c})$ as graded modules, so there is an exact sequence
\begin{eqnarray*}
0\rightarrow A^{d}\overset{\psi }{\rightarrow }N\rightarrow
J\rightarrow 0,
\end{eqnarray*}
with the graded homomorphism $\psi$ induced by $\phi$.

A straightforward adaptation of Miller's argument ([op. cit., p.~31])
then shows that the graded module $J$ is a torsion-free $A$-module,
and so, as is easily seen, a homogeneous ideal (up to graded
isomorphism).

Next we show that we may suppose that the homogeneous ideal $J$ has
height 2. Suppose that $J$ has height 1. By an obvious `homogeneous'
analogue of the proof of \cite[Theorem~5(b)]{k}, given our hypotheses,
a non-zero homogeneous element of $A$ that is not a unit is a unique
product of homogeneous elements each of which generates a homogeneous
prime ideal (we note that, since $A$ is a domain, a product of
non-zero elements of $A$ is homogeneous if and only if each element is
homogeneous). Hence, taking a finite set of homogeneous generators of
$J$, we can find a smallest homogeneous principal ideal $aA$, with $a$
a homogeneous element, that contains $J$. Clearly $a^{-1}J$ is a
homogeneous ideal. We claim that $a^{-1}J$ is of height at least
$2$. For otherwise, $a^{-1}J$ is contained in a homogeneous prime $P$
of height $1$, and by hypothesis, $P=bA$ for some homogeneous element
$b\in A$; it would then follow that $J\subseteq abA\subseteq aA$, so
$abA=aA$, i.e., $bA=A$, a contradiction. Hence we may suppose that $J$
has height at least $2$. Another straightforward adaptation of
Miller's argument ([op. cit., pp.~31-32]) shows that $J$ has
precisely height $2$ and that $J$ is in fact prime.

Finally, on replacing $A$ by its localization at its irrelevant
maximal ideal $\mathcal{M}$, we can directly apply the argument in
\cite{miller} to prove his Corollary 2.7 with reference to the ideal
$\overline{J}:=J_{\mathcal{M}}+(Z_{1},\ldots,Z_{bd})A_{\mathcal{M}}$
considered in $S_{\mathcal{N}},$ where $\mathcal{N}$ denotes the
irrelevant maximal ideal in $S$ (cf. [op. cit., p.~33 and top of
  p.~34]): note that if $x_{1},\ldots ,x_{n}$ are the degree 1
generators of $\mathcal{N}$, then the $Z_{l}$ are general $k$-linear
combinations of $x_{1},\ldots,x_{n}$ and the $Y_{i,j}$, and so are
homogeneous forms of degree $1$. We next observe that a standard
graded algebra $D$ with irrelevant maximal ideal $\mathcal{P}$ is a
domain if $D_{\mathcal{P}}$ is a domain. It follows that the
homogeneous ideal $\widetilde{J}:=J+(Z_{1},\ldots,Z_{bd})A$,
considered in $S$, is a prime ideal. Since $\widetilde{J}$ requires at
least as many generators as $\overline{J}$, the result follows.
\end{proof}

The arguments in the proof of Step~1 of the proof of
Theorem~\ref{shm-graded} can easily be adapted for use in the
following more general situation. We adopt the notation of
Section~\ref{sgraded}.

\begin{Theorem}\label{variant}
Let $A$ be a standard graded algebra over a field of characteristic
$0$, with $A$ a domain. Suppose that $\kdim(A)\geq 4$ and that $A$
satisfies the Serre condition $(S_{3})$. Then $A$ possesses a
homogeneous prime ideal $P$ of height $2$ with $\mu(P)=3$.
\end{Theorem}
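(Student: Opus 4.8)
Theorem \ref{variant} asks us to produce a height-2 homogeneous prime ideal with exactly 3 generators, under weaker hypotheses than Theorem \ref{shm-graded}: we only assume $A$ is a standard graded domain over a characteristic-0 field, with $\kdim(A)\geq 4$ and $(S_3)$. The key observation is that the construction in Step 1 of the proof of Theorem \ref{shm-graded} is local and algebraic, and does not actually need $A$ to be Cohen-Macaulay or $(R_{d-1})$: it only needs a regular sequence of three degree-1 elements and enough depth to run the Northcott/Hilbert-Burch machinery. So the plan is to re-run Step 1 verbatim to build the prototype prime $K$ in $D=A[X,Y]$, and then re-run the Bertini-style retraction of Step 2 to push $K$ down to a prime ideal $P$ in $A$ of height 2 with $\mu(P)=3$.

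Let me say why the reader should believe this adaptation works. Under $(S_3)$ together with $\kdim(A)\geq 4$, I first want to extract a regular sequence $x,y,z$ of homogeneous degree-1 elements with the incidence properties used in Step 1 — specifically that $\rad((u,y)A)=\rad((x,y)A)$, $\rad((u,v,z)A)=\rad((x,y,z)A)$, and the analogous height-3 radical computations with $u=x^2-yz$, $v=y^2-xz$, $w=z^2-xy$. Noether normalization (Proposition \ref{noether}) supplies three degree-1 elements forming part of a homogeneous system of parameters; since $A$ is a domain these generate an ideal of height 3, which is exactly what the radical identities record. The $(S_3)$ hypothesis is what guarantees that $u+xX,\,v+xY$ remains a \emph{grade}-2 regular sequence in $D$ and that the colon ideal $K=(u+xX,v+xY)D:_D x$ is still unmixed of height 2 with projective dimension 2: Northcott's theorem (\cite[Theorem~2]{n}) and the Hilbert-Burch resolution (\ref{min-free-res}) go through because the relevant grades are controlled by depth, and $(S_3)$ is precisely the condition ensuring $\depth\geq 3$ at the codimension-$\leq 2$ primes where the argument is tested. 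The primeness of $K$ is established exactly as before by checking $(D/K)_x\cong A_x$ is a domain and that $x$ is regular modulo $K$.

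For the retraction, the Bertini argument of Step 2 rested on \cite[(5.5)~Satz]{f}, whose hypotheses are that $\Proj(D/K)$ is an integral scheme of positive projective dimension over a characteristic-0 field — all of which hold here since $D/K$ is a domain with $\prdim(D/K)=3$. The double generic hyperplane section then produces, for suitable $\alpha\in k^{n+2}$ and $\beta\in k^{n+1}$, a prime ideal $K''=(u+xX',v+xY'',w-yX'-zY'')A$ with $\prdim(A/K'')=1$, hence $\height(K'')=\grade(K'')=2$; and the determinantal presentation $K''=I_2\!\left(\begin{smallmatrix}x&z&-y\\ y&x+X'&-z+Y''\end{smallmatrix}\right)$ carries over to give the same minimal resolution $0\to A^2(-2)\to A^3(-1)\to A\to A/K''\to 0$, so $\mu(K'')=3$. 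Setting $P:=K''$ finishes the argument.

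The main obstacle — and the one place I would write out carefully rather than waving at Step 1 — is verifying that the grade and unmixedness conclusions survive the weakening from Cohen-Macaulay to $(S_3)$. In the proof of Theorem \ref{shm-graded} the ambient Cohen-Macaulayness was used freely to assert, for instance, that colon ideals of grade-2 complete intersections are height-unmixed and that the Hilbert-Burch resolution is available. Here I must instead track depths explicitly: the condition $(S_3)$ gives $\depth A_{\mathfrak q}\geq \min(3,\height \mathfrak q)$, which passes to $D=A[X,Y]$ and suffices to keep $\grade((u+xX,v+xY)D)=2$ and to keep $K$ unmixed of height 2. Once those two facts are secured, every remaining step of Step 1 and Step 2 is formally identical to the proof of Theorem \ref{shm-graded}, so it is enough to point the reader there and flag only where $(S_3)$ replaces the former global Cohen-Macaulay assumption.
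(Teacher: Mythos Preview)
You have located the difficulty in the wrong step. Step~1 does adapt essentially as you describe (this is what the paper means by ``straightforward adaptation'' via \cite[1.5.15(e), 1.5.11, 1.5.12]{bh}), but your claim that Step~2 is ``formally identical'' to Theorem~\ref{shm-graded} hides a genuine gap. In the proof of Theorem~\ref{shm-graded} the authors explicitly record, and then use ``without further comment'', the equivalence: for a standard graded \emph{Cohen--Macaulay} algebra $S$, $\Proj(S)$ is integral if and only if $S$ is a domain. Flenner's projective Bertini \cite[(5.5)~Satz]{f} only tells you that $\mathbb{V}_{+}(f_{\alpha})$ is an integral subscheme, i.e.\ that $\Proj(D'/K')$ is integral. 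To conclude that $K'$ is prime --- which you need before you can run Bertini a second time --- you must bridge from the scheme to the ring, and that bridge is the Cohen--Macaulay equivalence. Under only $(S_{3})$ on $A$, the quotient $D'/K'$ need not be Cohen--Macaulay, and nothing in your argument rules out the irrelevant maximal ideal appearing as an embedded prime after the first cut. Your assertion that the hypotheses of \cite[(5.5)~Satz]{f} ``are that $\Proj(D/K)$ is an integral scheme of positive projective dimension'' glosses over exactly this point.

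The paper repairs Step~2 by a genuinely different route: it localizes $D$ and $A$ at their irrelevant ideals and mimics Miller's argument \cite[Theorem~2.1, Part~2]{miller}, replacing the projective Bertini by Flenner's \emph{local} Bertini theorem (analytic irreducibility of a generic hyperplane section). The decisive computation, which is where $(S_{3})$ is actually spent, is this: $(S_{3})$ on $A$ gives $\depth A_{\mathcal{M}}\geq 3$, hence $\depth D_{\mathcal{M}'}\geq 5$; since $\text{pd}_{D}(D/K)=2$, Auslander--Buchsbaum yields $\depth(D/K)_{\mathcal{M}'}\geq 3$. That depth bound is precisely what licenses \emph{two} successive applications of the local Bertini theorem (each requires depth $\geq 2$), giving analytic irreducibility of $D/(K+(Z_{1},Z_{2})D)$ and hence primeness of the retracted ideal in $A$. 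Finally, $\mu=3$ is obtained not by asserting the resolution ``carries over'' but by checking $\Tor_{i}^{D}(D/K,D/Z_{1}D)=0$ for $i\geq 1$, so that the Hilbert--Burch complex descends modulo $Z_{1}$ and then $Z_{2}$. Your write-up identifies neither the depth bound $\depth(D/K)\geq 3$ nor its role, and this is the heart of the matter.
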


\begin{proof}[Sketch of Proof] 
In the light of \cite[Propositions~1.5.15(e), 1.5.11 and 1.5.12]{bh},
a straightforward adaptation of the original argument means that we
need only comment on how to adapt the details of Step 2, viz. the
application of the double use of a Bertini Theorem, in order to finish
the proof.

Replace $D$ by its localization at its irrelevant ideal,
$\mathcal{M}^{\prime}$ say, and $A$ by its localization at its
irrelevant ideal $\mathcal{M}$.

As before, $D/K$ has projective dimension $2$. Since $A$ satisfies the
condition $(S_{3})$ and $X,Y$ are indeterminates over $A$,
$\depth(D)\geq 5$. By the Auslander-Buchsbaum Formula therefore,
$\depth(D/K)\geq 3$. We can now mimic the argument in Part~2 of the
proof of \cite[Theorem~2.1, p.~33ff.]{miller}, here cutting with 2
generic hyperplanes $ Z_{1}$ and $Z_{2}$, in the notation on p.~33 of
\cite{miller}. 

By H. Flenner's Bertini Theorem (cf. \cite[p.~32]{miller}),
$D/(K+Z_{1}D)$ is analytically irreducible. Note that
$\depth(D/(K+Z_{1}D))\geq 2$. Hence another application of Bertini's
Theorem is allowed and we have that $D/(K+(Z_{1},Z_{2})D)$ is again
analytically irreducible.

From the short exact sequence $0\rightarrow
D\overset{Z_{1}}{\rightarrow } D\rightarrow D/Z_{1}D\rightarrow 0$, we
quickly establish that $\Tor_{i}^{D}(D/K,D/Z_{1}D)=0$ for $i\geq 1$.
Hence the minimal free resolution of $D/K$ over $D$ afforded by the
Hilbert-Burch Theorem descends via tensoring with $D/Z_{1}D$ to give
the minimal free resolution over $D/Z_{1}D$ of $D/(K+Z_{1}D)$, this
being in Hilbert-Burch form. Repeating the argument on further
factoring out $Z_{2}\cdot D/Z_{1}D$ shows that
\begin{eqnarray*}
\mu _{D/(Z_{1},Z_{2})D}((K+(Z_{1},Z_{2})D)/(Z_{1},Z_{2})D)=3,
\end{eqnarray*}
and the full result now follows.
\end{proof}

\begin{Remark}{\rm 
Suppose we have a standard graded domain $S$ over a field $k$ such
that every homogeneous prime ideal of height 1, respectively 2, is
generated by 1, respectively 2, elements. It easily follows from the
first part of the proof of \cite[Theorem~1.5.9]{bh} that $S$ satisfies
the condition $(R_{2})$ . Hence if we further suppose that
$\prdim(S)\geq 3$ and that $k$ has characteristic 0, then $S$
satisfies the hypotheses of the Theorem~\ref{approachmil}. It
follows that Theorem~\ref{shm-graded} is a consequence of 
Theorem~\ref{approachmil}.

On the other hand, it is clear that Theorem~\ref{shm-graded} is also a
consequence of Theorem~\ref{variant}.}\end{Remark}

\section*{Acknowledgment} 
We would like to thank J.-M. Giral for drawing the
paper \cite{miller} to our attention and to thank Abd\'o Roig for
useful discussions.

{\small
}

\vspace{0.5cm}

\parbox[t]{6.5cm}{\footnotesize

\noindent Shiro Goto

\noindent Department of Mathematics

\noindent School of Science and Technology

\noindent Meiji University

\noindent Tama, Kawasaki, KANAG 214, JAPAN

\noindent goto@math.meiji.ac.jp}
\,
\parbox[t]{7cm}{\footnotesize

\noindent Liam O'Carroll

\noindent Maxwell Institute for Mathematical Sciences

\noindent School of Mathematics

\noindent University of Edinburgh

\noindent EH9 3JZ, Edinburgh, Scotland

\noindent L.O'Carroll@ed.ac.uk } 

\vspace{0.5cm}

\parbox[t]{6.5cm}{\footnotesize

\noindent Francesc Planas-Vilanova

\noindent Departament de Matem\`atica Aplicada~1

\noindent Universitat Polit\`ecnica de Catalunya

\noindent Diagonal 647, ETSEIB

\noindent 08028 Barcelona, Catalunya

\noindent francesc.planas@upc.edu}
\end{document}